\numberwithin{equation}{section}
\theoremstyle{plain}
\newtheorem{theorem}{Theorem}[section]
\newtheorem*{theorem*}{Theorem}
\newtheorem*{lemma*}{Lemma}
\newtheorem{lemma}[theorem]{Lemma}
\newtheorem{proposition}[theorem]{Proposition}
\newtheorem{corollary}[theorem]{Corollary}
\theoremstyle{definition}
\theoremstyle{remark}
\newtheorem{remark}[theorem]{Remark}
\newcommand{\ep}{\varepsilon}
\newcommand{\R}{\mathbb{R}}
\newcommand{\C}{\mathbb{C}}
\newcommand\supp{\mathop{\rm supp}}
\newcommand\sgn{\mathop{\rm sgn}} 
\newcommand\real{\mathop{\rm Re}}
\newcommand\imag{\mathop{\rm Im}}
\newcommand*{\defeq}{\mathrel{\vcenter{\baselineskip0.5ex \lineskiplimit0pt

                     \hbox{\scriptsize.}\hbox{\scriptsize.}}}%
                     =}
\newcommand*{\qefed}{=\mathrel{\vcenter{\baselineskip0.5ex \lineskiplimit0pt

                     \hbox{\scriptsize.}\hbox{\scriptsize.}}}%
                     }
\definecolor{jeffColor}{RGB}{102, 0, 204}
\title[Magnetic potentials on the line]{Semiclassical estimates for the magnetic Schr\"odinger operator on the line}
\begin{document}

\author{Andr\'es Larra\'in-Hubach}
\address{Department of Mathematics, University of Dayton, Dayton, OH 45469-2316, USA}
\email{alarrainhubach1@udayton.edu}
\author{Jacob Shapiro}
\address{Department of Mathematics, University of Dayton, Dayton, OH 45469-2316, USA}
\email{jshapiro1@udayton.edu}

\keywords{Schr\"odinger operator, magnetic potential, Carleman estimate, resolvent estimate}
\subjclass{34L25}

\begin{abstract}
We prove a weighted Carleman estimate for a class of one-dimensional, self-adjoint Schr\"odinger operators $P(h)$ with low regularity electric and magnetic potentials, where $h > 0$ is a semiclassical parameter. The long range part of either potential has bounded variation. The short range part of the magnetic potential belongs to $L^1(\R) \cap L^2(\R)$, while the short range part of the electric potential is a finite signed measure.  The proof is a one dimensional instance of the energy method, which is used to prove Carleman estimates in higher dimensions and in more complicated geometries. The novelty of our result lies in the weak regularity assumptions on the coefficients. As a consequence of the Carleman estimate, we establish optimal limiting absorption resolvent estimates for $P(h)$. We also present standard applications to the distribution of resonances for $P(1)$ and to associated evolution equations.
\end{abstract}
\maketitle 
\author
\section{Introduction and statement of results} \label{intro section}
The goal of this paper is to prove sharp limiting absorption resolvent estimates for one dimensional magnetic Schr\"odinger operators with low regularity coefficients. More precisely, we consider operators of the form
\begin{equation} \label{mag schro oned}
\begin{split}
P(h) &= \beta(x,h)( -h^2\partial_x(\alpha(x, h) \partial_x) + hb(x,h)D_x + hD_xb(x,h)) \\
&+V(x,h) : L^2(\R ; \beta^{-1} dx) \to L^2(\R; \beta^{-1}dx ), \qquad h > 0,
\end{split}
\end{equation}
with real valued coefficients, where $D_x \defeq -i \partial_x$ and $h$ is a semiclassical parameter. 

First, we fix the assumptions on the coefficients in \eqref{mag schro oned}. The electric potential $V$ and magnetic potential $b$ may depend on $h$ and they decompose
\begin{gather} 
V(x,h) = V_0(x,h) + V_1(x,h), \label{decompose electric potential} \\
b(x,h) = b_0(x,h) + b_1(x,h), \label{decompose magnetic potential}
\end{gather}
where
\begin{gather} 
\text{$V_0$ is a finite signed Borel measure on $\R$,} \label{V0 Borel measure} \\
b_0 \in L^1(\R)\cap L^2(\R),
\end{gather}
and
\begin{equation} \label{V1 BV}
\text{$V_1, \, b_1$ have bounded variation (BV),} \\
\end{equation}
in the sense that each is a difference of bounded nondecreasing functions. As for the coefficients $\alpha$ and $\beta$, which may also depend on $h$, we suppose 

 \begin{equation} \label{alpha beta BV}
\text{$\alpha, \, \beta : \R \to (0,\infty)$ have BV,}
\end{equation}
and for all $h > 0$,
\begin{equation} \label{inf alpha beta}
\inf \alpha(\cdot, h), \, \inf \beta(\cdot, h)>0.
\end{equation}
Recall that the distributional derivative of a BV function $f : \R \to \R$ is a finite signed Borel measure, which we denote by $df$. We review this and other standard facts about BV functions in Section \ref{bv review section}. 

Prior to studying resolvent estimates, it is important to describe a domain on which \eqref{mag schro oned} is self-adjoint. Self-adjointness for large classes of Sturm-Liouville operators with singular coefficients was addressed systematically in \cite{hrmy01, hrmy12, ecte13,lsw24}. In \cite{aley12}, self-adjointness of \eqref{mag schro oned} was shown in the case $\alpha = \beta = 1$, $V_1 = b_0 = 0$, and $V_0, b_1,  db_1 \in L^1(\R)$. We approach this task from an elementary viewpoint, using the calculus of BV functions and $L^2$-based estimates to characterize the realization of \eqref{mag schro oned} stemming from the sesquilinear form
\begin{equation*}
q(u,v) \defeq  h^2 \int_{\R} \alpha \overline{u}' v' dx + ih \int_{\R} b(v \overline{u}' - \overline{u} v') dx + \int_{\R} \overline{u}_c v_c \beta^{-1} V_0, 
\end{equation*}
defined on the Sobolev space $H^1(\R)$.  In Appendix \ref{self adj oned section}, we show \eqref{mag schro oned} is self-adjoint with respect to 
  \begin{equation} \label{D}
\begin{gathered}
\mathcal{D} \defeq \{ u \in H^1(\R) :  h \alpha u' + ibu \text{ belongs to $L^\infty(\R)$, has locally bounded variation, and} \\
\beta d( -h^2 \alpha u' - i h bu)  + h \beta b D_xu +  u_cV \in L^2(\R) \},
\end{gathered}
\end{equation} 
where $u_c$ denotes the unique continuous representative of $u$. For each $u \in \mathcal{D}$, $P(h)u \in L^2(\beta^{-1}dx)$ is then defined distributionally by
\begin{equation} \label{P(h)u}
P(h)u \defeq \beta d( -h^2 \alpha u' - i h bu)  + h \beta b D_xu +  u_cV.
\end{equation}

Our principal result is a Carleman estimate for \eqref{mag schro oned}.  

\begin{theorem}\label{Carleman mag schro 1D thm}
Fix  $s > 1/2$ and $E_{\max}, \, h_0, \, \ep_0 > 0$. Let the semiclassical parameter $h$ vary in $(0,h_0]$. Suppose $\{E(h)\}_{h \in (0,h_0]} \subseteq [-E_{\max}, E_{\max}]$ and $\{\ep(h)\}_{h \in (0, h_0]} \subseteq [-\ep_0, \ep_0]$ are families that may depend on $h$. Let $P(h)$ be given by \eqref{mag schro oned}, with coefficients satisfying \eqref{decompose electric potential} through \eqref{inf alpha beta}, and equipped with domain $\mathcal{D}$ as in \eqref{D}. 

Suppose further that for each $h \in (0, h_0]$, there exists $R_1(h) \ge 0$ so that 
\begin{equation} \label{general inf}
\inf_{|x| \ge R_1(h)}(\beta^{-1}( x, h) \alpha(x, h)(E(h) -V_1(x, h))  + b^2_1(x, h)) > 0.
\end{equation}
Fix a phase  $\varphi = \varphi( \cdot, h)\in C^0(\R ; [0, \infty))$ such that $\varphi(0) = 0$, $\varphi$ is even, $\varphi \in C^\infty(0,\infty)$, and 
\begin{equation} \label{varphi}
\partial_x \varphi(x) = \begin{cases} 
 k(h), &  0 < x \le R_1(h), \\
 0 & x >  2R_1(h),
\end{cases}
\end{equation}
where $k(h) \ge 0$ is chosen large enough so that
\begin{equation} \label{tau}
\tau = \tau(h) \defeq \inf_{x \in \R} ((\beta^{-1}(x, h)  \alpha(x, h)(E(h) -V_1(x, h)) + \alpha^2(x, h)(\partial_x \varphi)^2(x, h) + b^2_1(x, h)) > 0.
\end{equation}

For each $h \in (0, h_0]$, there is $C(h) > 0$ depending on $s, \, E_{\max}, \, h_0, \, \ep_0, \,h$, and the coefficients of \eqref{mag schro oned}, so that for all $v \in \mathcal{D}$ with $\langle x \rangle^{2s} (P(h) - E(h) - i\ep(h)) v \in L^2(\R)$,
\begin{equation} \label{weighted est mag schro}
\begin{split}
  \int_{\R} \langle x \rangle^{-2s}(|e^{\varphi(x)/h}v|^2 &+ |(h\alpha \partial_x + ib)e^{\varphi(x)/h}v|^2) dx \\
    &\le  C(h) \int_{\R} \langle x \rangle^{2s} |(P(h) - E(h) - i\ep(h)) v |^2 dx +   |\ep(h)| C(h)  \int_{\R} | v|^2 dx,
  \end{split}
\end{equation}
where $\langle x \rangle \defeq (1 + |x|^2)^{1/2}$.
\end{theorem}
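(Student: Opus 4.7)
The plan is to carry out the one-dimensional energy method. Set $w \defeq e^{\varphi/h}v$ and let $L \defeq h\alpha \partial_x + ib$, so that the left hand side of \eqref{weighted est mag schro} reads $\int \langle x\rangle^{-2s}(|w|^2 + |Lw|^2)\, dx$. A direct computation using \eqref{P(h)u} together with the BV Leibniz rule recalled in Section \ref{bv review section} shows that conjugation of $P(h)$ by $e^{\varphi/h}$ effectively replaces the magnetic momentum $Lw$ by the gauged quantity $Lw - \alpha \varphi' w$, and produces a distributional identity of the schematic form
\begin{equation*}
g \defeq e^{\varphi/h}(P(h)-E-i\ep)v = -h\beta \, d(Lw - \alpha \varphi' w) + h\beta b D_x w + V w_c - (E+i\ep)w,
\end{equation*}
with $\langle x\rangle^s g \in L^2(\R)$ by hypothesis. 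The key point is that the conjugation introduces the new gauge term $\alpha \varphi'$, which is the source of the positive contribution $\alpha^2(\partial_x\varphi)^2$ in \eqref{tau}.

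The idea is then to introduce a real-valued energy density of the form
\begin{equation*}
F(x) \defeq \alpha^{-1}\beta^{-1}|Lw(x)|^2 + \bigl(\beta^{-1}\alpha(E - V_1) + \alpha^2(\partial_x\varphi)^2 + b_1^2\bigr)|w_c(x)|^2,
\end{equation*}
which by \eqref{tau} and \eqref{inf alpha beta} satisfies $F(x) \ge c(h)(|w(x)|^2 + |Lw(x)|^2)$ pointwise. Computing $dF$ using BV calculus, the resulting distribution splits into: (i) a BV-measure part involving $d\alpha, d\beta, dV_1, db_1$; (ii) a regular cross term pairing $g$ with $w_c$ or $Lw$; (iii) short-range contributions from $V_0$ and $b_0$; and (iv) an imaginary part proportional to $\ep |w|^2$. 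The design of $F$ arranges that the signed measures in (i), combined with the phase-generated $\alpha^2(\partial_x \varphi)^2$ term in $F$, have a controllable sign, leaving $|dF|$ bounded by (ii), (iii), and (iv). Integrating $dF$ from $\pm \infty$ to $x$, using decay of $F$ at infinity (ensured by $v \in \mathcal{D}$ and by $\partial_x \varphi \equiv 0$ outside $[-2R_1(h), 2R_1(h)]$ combined with \eqref{general inf}), yields a pointwise bound on $F(x)$ by the right hand side of \eqref{weighted est mag schro} plus a small multiple of $\int \langle y\rangle^{-2s} F\, dy$. The latter is absorbed by the left hand side after multiplying by $\langle x\rangle^{-2s}$ (integrable thanks to $s > 1/2$) and applying Cauchy-Schwarz to the $g \cdot w_c$ pairing with weights $\langle y\rangle^{\pm s}$, producing \eqref{weighted est mag schro}.

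The main obstacle is the BV/measure calculus. Because $\alpha, \beta, V_1, b_1$ are only BV, $V_0$ may be purely atomic, and $b_0$ only $L^1 \cap L^2$, the Leibniz rule for $d(|w_c|^2 \cdot \text{BV})$ must be applied using continuous representatives and the Lebesgue decomposition of signed measures into absolutely continuous, singular continuous, and pure point parts. Atomic contributions to $dF$ must be shown to have the correct sign; this follows from continuity of $|w_c|^2$ combined with \eqref{tau}. The $V_0$ and $b_0$ contributions, which are genuinely perturbative, must be absorbed using finiteness of the total variation of $V_0$ and splitting $\int b_0 |w_c||Lw|\, dy$ into $L^2 \cdot L^2$ and $L^1 \cdot L^\infty$ pairings. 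That $C(h)$ may depend on $h$ allows the constants from these absorptions to be swept under the rug; this would not be permissible for the uniform-in-$h$ resolvent estimates that are the eventual application, but suffices at this stage.
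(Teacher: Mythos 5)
Your proposal captures the broad shape of the paper's energy-method argument (a pointwise energy functional $F$ built from $|Lw|^2$ and a squared-modulus term whose coefficient is precisely the quantity appearing in \eqref{tau}, BV calculus for $dF$, and Cauchy--Schwarz against $\langle x\rangle^{\pm s}$), but there is a central, uncorrectable gap as written.

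\textbf{The atomic parts of $dF$ do not have a controllable sign.} You assert that ``the design of $F$ arranges that the signed measures... have a controllable sign'' and that ``atomic contributions to $dF$ must be shown to have the correct sign; this follows from continuity of $|w_c|^2$ combined with \eqref{tau}.'' This is false. The measure $d\bigl(\beta^{-1}\alpha(E-V_1)+\alpha^2(\varphi')^2+b_1^2\bigr)$ has no sign constraint whatsoever: $V_1,b_1,\alpha,\beta$ are generic BV functions, so $dV_1,db_1,d\alpha,d\beta$ carry atoms of arbitrary sign, and multiplying by $|w_c|^2\ge 0$ does not change their sign. Condition \eqref{tau} bounds the \emph{coefficient} of $|u|^2$ in $F$ from below but says nothing about the sign of its derivative as a measure. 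Likewise $d(\alpha^{-1}\beta^{-1})$ paired with $(|Lw|^2)^A$ has arbitrary sign. These terms are genuinely a remainder, not a disposable contribution. The paper does not try to sign them: it introduces a second ingredient, a nondecreasing, odd, vanishing-at-the-origin \emph{weight} $w_\eta(x)$ (distinct from your $w$), whose derivative $dw_\eta$ is designed so that the continuous part of the remainder measure $\mu$ is dominated by $|w_\eta|\mu_c$ and the atomic part is matched by a Gaussian approximation of $\mu_d$, with the estimate recovered in the limit $\eta\to 0^+$ after a fairly delicate bookkeeping (Appendix B). Without some analogous device, your ``$|dF|$ is bounded by (ii), (iii), (iv)'' step collapses.

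A secondary concern: integrating $dF$ from $\pm\infty$ to $x$ to obtain a pointwise bound on $F(x)$, then multiplying by $\langle x\rangle^{-2s}$, requires $\sup_x F(x)$ to be controlled by $\|w_c\|_{L^\infty}^2$ and $\sup|Lw|^2$. These $L^\infty$ norms in turn depend on $F$, so the argument as sketched is circular. The paper avoids this by computing $d(w_\eta F)$ rather than $dF$; integrating over $\R$ then gives $0=\int d(w_\eta F)$ with no boundary term to bound, and the $L^\infty$ bound $\|v\|_{L^\infty}^2\le\|v\|_{L^2}\|v'\|_{L^2}$ is used only at the end, in a non-circular way, to control the extra $\ep$-term. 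If you want to salvage your pointwise route you would need to supply a separate a priori bound on $\|w_c\|_{L^\infty}$ in terms of the data, and you would still need the weight (or some substitute) to absorb the signed atoms.
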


 \begin{remark}
 In one dimension, a change of variable can transform a first order perturbation into a zeroth order perturbation, reducing the proof of estimates like \eqref{weighted est mag schro} to the case $P(h) = -h^2 \partial^2_x + V$. But the most general transformation the authors know of  \cite[Proposition 1.1]{d'anfa06} requires the higher order coefficients to be absolutely continuous and have short range derivatives.
 \end{remark}

\begin{remark}
 If $V_1(\cdot, h)$ exhibits ``long range" decay, i.e., $\limsup_{|x| \to \infty} V_1(x, h) = 0$, then \eqref{general inf} is satisfied for any $E(h) > 0$. Hence \eqref{weighted est mag schro} with $\ep(h) = 0$ implies $P(h)$ has no positive eigenvalues. We can also rule out eigenvalues $E(h) < 0$ for which \eqref{general inf} holds. Absence of positive eigenvalues for Schr\"odinger operators with locally $H^{-1}$ potentials that have $L^1$ decay was proved in \cite{lsw24}.
\end{remark}

If the coefficients depend on $h$ in a uniform way, one can better track constants in the estimates leading to \eqref{weighted est mag schro}. Thus $C(h)$ may be specified more precisely, yielding sharp exterior and exponential limiting absorption resolvent estimates. 

In the sequel, $|\mu|$ stands for the total variation of a finite signed Borel measure $\mu = \mu(h)$ which may depend on $h$. We put $\| \mu \| \defeq |\mu|(\R)$.  Recall that $|\mu|$ is defined as the sum of the positive and negative variations of $\mu$, as supplied by the Jordan decomposition theorem \cite[Theorem 3.4]{fo}. 

\begin{corollary} \label{ext resolv est corollary}
Assume the hypotheses of Theorem \ref{Carleman mag schro 1D thm} and the following: 
\begin{enumerate}
\item $\|V_1(\cdot, h)\|_{L^\infty}$, $\|dV_1(h)\|$, $\|b_1(\cdot, h)\|_{L^\infty}$, $\|db_1(h)\|$, $\|d\alpha(h) \|$, $\|d\beta(h)\|$, $\sup \alpha(\cdot, h)$,\\ and $\sup \beta(\cdot, h)$ are bounded uniformly for $h \in (0,h_0]$.

\item There is $c > 0$ independent of $h$ so that for all $h \in (0,h_0]$,
\begin{equation} \label{unif inf alpha beta}
\inf \alpha(\cdot, h), \, \inf \beta(\cdot, h) > c. 
\end{equation}
\item There is $R_1 > 0$ independent of $h$ so that
\begin{equation} \label{limsup 1D}
\tau_1 \defeq \inf_{h \in (0,h_0], \, |x| \ge R_1} (\beta^{-1}(x,h)\alpha(x, h)(E(h) - V_1(x , h)) + b_1^2(x, h)) > 0.
\end{equation}
\item There is $R_0 > 0$ independent of $h$ so that $V_0(x; h)$ and $b_0(x;h)$ are supported in $[-R_0, R_0]$ for all $h \in (0,h_0]$.
\end{enumerate}
Then there exists $C_0> 0$ independent of $h$ so that for all $h \in (0,h_0]$ and $\ep(h) \neq 0$,
\begin{equation} \label{ext resolv est mag schro}
 \| \langle x \rangle^{-s}\mathbf{1}_{> R} (P(h) - E(h) - i\varepsilon(h))^{-1} \mathbf{1}_{> R} \langle x \rangle^{-s}\|_{L^2(\mathbb R) \to L^2(\mathbb R)} \le \frac{C_0}{h},
\end{equation}
where $\mathbf{1}_{> R}$ is the indicator function of $\{ x : |x| > R \}$, and $R = \max(R_0, R_1)$. 
\end{corollary}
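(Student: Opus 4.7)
The plan is to apply Theorem \ref{Carleman mag schro 1D thm} to $v := (P(h) - E(h) - i\ep(h))^{-1} f$ with $f$ supported in $\{|x|>R\}$, then convert the resulting Carleman estimate into the desired sandwiched resolvent bound using the self-adjointness of $P(h)$ and Young's inequality. Concretely, for $g \in L^2(\R)$ with $\langle x \rangle^s g \in L^2(\R)$ (a dense subspace), I would set $f := \mathbf{1}_{>R} \langle x \rangle^{-s} g$ and $v := (P(h) - E(h) - i\ep(h))^{-1} f \in \mathcal{D}$; then $\langle x \rangle^{2s}(P(h) - E(h) - i\ep(h)) v = \mathbf{1}_{>R} \langle x \rangle^{s} g \in L^2$, so Theorem \ref{Carleman mag schro 1D thm} is applicable. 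For the phase I would take $k(h) \equiv k_0$ a fixed positive constant: hypotheses (1)--(3) imply that $\beta^{-1}\alpha(E - V_1) + b_1^2$ is bounded below uniformly in $(x,h) \in \R \times (0,h_0]$, so for $k_0$ large enough depending only on those uniform bounds, the additional $\alpha^2 k_0^2$ term forces $\tau > 0$ independent of $h$. Since $R_1$ is $h$-independent by (3), $\varphi$ may be chosen as a fixed nonnegative even function with $\partial_x\varphi$ supported in $[-2R_1, 2R_1]$.

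The hard part will be verifying that, under hypotheses (1)--(4), the constant $C(h)$ in \eqref{weighted est mag schro} may be taken to satisfy $C(h) \le C_0/h$ with $C_0$ independent of $h$. This requires a careful pass through the proof of Theorem \ref{Carleman mag schro 1D thm} to replace each coefficient-dependent estimate by its uniform counterpart from (1)--(4) and to track the powers of $h$, confirming that exactly one factor of $h^{-1}$ emerges, as one expects for a semiclassical Carleman estimate of an operator of order $h^2$. Granting this $h^{-1}$ scaling, the remainder is routine.

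To close, let $X := \|\mathbf{1}_{>R} \langle x \rangle^{-s} v\|_{L^2}$. Since $\varphi \ge 0$, $X^2$ is bounded by the first summand on the left side of \eqref{weighted est mag schro}; together with $\int \langle x \rangle^{2s}|f|^2\, dx \le \|g\|_{L^2}^2$, Theorem \ref{Carleman mag schro 1D thm} yields
\begin{equation*}
X^2 \le \frac{C_0}{h}\|g\|_{L^2}^2 + \frac{C_0}{h}|\ep(h)|\,\|v\|_{L^2}^2.
\end{equation*}
For the second term, self-adjointness of $P(h)$ on $L^2(\beta^{-1}dx)$ gives $\ep(h)\|v\|_{L^2(\beta^{-1})}^2 = -\mathrm{Im}\,\langle f, v\rangle_{L^2(\beta^{-1})}$, and the uniform two-sided bounds on $\beta$ from (1) and (2) combined with the support of $f$ produce $|\ep(h)|\,\|v\|_{L^2}^2 \le C_1 \|g\|_{L^2} X$. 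Substituting into the display and applying Young's inequality to absorb the cross term into $\tfrac{1}{2} X^2$ yields $X \le (C_2/h)\|g\|_{L^2}$. A density argument extends this to all $g \in L^2$, establishing \eqref{ext resolv est mag schro}.
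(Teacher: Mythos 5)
There is a genuine gap, and it sits precisely in the step you yourself flag as ``the hard part'': the claim that the constant $C(h)$ in \eqref{weighted est mag schro} can be taken to be $\lesssim h^{-1}$ with the phase $\varphi$ you chose. With $k(h)\equiv k_0>0$ (which you correctly note is needed to make $\tau>0$ in \eqref{tau}, since hypothesis (3) only gives positivity for $|x|\geq R_1$), the phase is strictly positive on $\{|x|>R_1\}$, and the proof of Theorem~\ref{Carleman mag schro 1D thm} picks up a factor $e^{2\sup\varphi/h}$ when converting back from $u=e^{\varphi/h}v$ to $v$ (see \eqref{pre penult est}, where the right side involves $|e^{\varphi/h}f|^2$, and \eqref{h alpha u prime plus b square}). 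Thus $C(h)$ is exponentially large in $1/h$, not $O(h^{-1})$, and that is exactly what yields the weaker Corollary~\ref{exp resolv est corollary}, not Corollary~\ref{ext resolv est corollary}. No bookkeeping through the proof of the theorem can fix this; the obstruction is structural.

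The paper's proof of Corollary~\ref{ext resolv est corollary} does not apply Theorem~\ref{Carleman mag schro 1D thm} as a black box. It reruns the energy method from \eqref{dwF} with two crucial modifications: it takes $\varphi\equiv 0$, and it chooses the weight $w_\eta$ to vanish identically on $[-R,R]$ (equation \eqref{weta ext}). Because $w\equiv 0$ on $[-R,R]$, the positivity that is needed is only the one on $\{|x|>R\}$ supplied by \eqref{limsup 1D}; no phase is required, so no exponential factor appears. Moreover, hypothesis (4) ensures $V_0$ and $b_0$ vanish on $\operatorname{supp} w$, so the singular measure $\mu$ collapses to $|d(\beta^{-1}\alpha(E-V_1)+b_1^2)|$, which has uniformly bounded total mass by hypothesis (1); this also lets the authors avoid the $L^\infty$ interpolation bound \eqref{peter paul Linfty}, which on its own would cost an extra $h^{-2}$. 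Controlling the residual $|\ep|$-term then requires a genuine integration-by-parts argument on $\{|x|>R\}$ (\eqref{prepare to integrate by parts}--\eqref{integrate by parts ext est}), exploiting that $h\alpha v'+ibv$ is locally absolutely continuous there, rather than the short self-adjointness argument in your last paragraph (which handles only the $\|v\|_{L^2}^2$ piece, not the $|h\alpha v'+ibv|^2$ piece that also appears). To repair your proposal you would need to abandon the black-box strategy and carry out this modified energy argument on the exterior region.
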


\begin{remark}
Note that \eqref{unif inf alpha beta} is a stronger condition than \eqref{inf alpha beta} because the lower bound $c$ is required to hold uniformly for $h \in (0,h_0]$. A familiar setting in which \eqref{limsup 1D} holds is when $E > 0, \alpha, \beta, V_1$ and $b_1$ are all independent of $h$, and $\limsup_{|x| \to \infty} V_1(x) = 0$. 
\end{remark}

\begin{corollary} \label{exp resolv est corollary}
Assume the hypotheses of Theorem \ref{Carleman mag schro 1D thm}, the first three conditions of Corollary \ref{ext resolv est corollary}, and 
\begin{enumerate}
\setcounter{enumi}{4}
\item $\|V_0(h)\|$, $\|b_0(\cdot, h)\|_{L^1}$, and  $\|b_0(\cdot, h)\|_{L^2}$ are bounded uniformly for $h \in (0,h_0]$.
\end{enumerate}
Then there is $C_1 > 0$ independent of $h$ so that for all $h \in (0, h_0]$ and $\ep(h) \neq 0$,
\begin{equation} \label{resolv est mag schro}
 \|\langle x \rangle^{-s}  (P(h) - E(h) - i\ep(h))^{-1}\langle x \rangle^{-s}\|_{L^2(\mathbb R) \to L^2(\mathbb R)} \le e^{C_1/h}.
\end{equation}
\end{corollary}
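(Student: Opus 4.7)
The plan is to apply the Carleman estimate \eqref{weighted est mag schro} to $v \defeq (P(h)-E(h)-i\ep(h))^{-1}\langle x\rangle^{-s}f$, absorb the $|\ep(h)| C(h) \|v\|^2$ error term on the right-hand side via the self-adjointness of $P(h)$, and then track the $h$-dependence of the Carleman constant $C(h)$ under the uniform hypotheses of Corollary \ref{exp resolv est corollary}. Hypotheses (1)--(3) of Corollary \ref{ext resolv est corollary} together with \eqref{limsup 1D} let us take $R_1(h) = R_1$ and $k(h) = k_0$ independent of $h$, so the phase $\varphi$ from \eqref{varphi} can be chosen so that $M \defeq \sup_{x, h} \varphi(x, h) < \infty$, and in particular $1 \le e^{\varphi/h} \le e^{M/h}$ pointwise.

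First, by density it suffices to prove \eqref{resolv est mag schro} for $f$ in a dense subspace (e.g. $C_c^\infty(\R)$), for which $\langle x\rangle^s f \in L^2$. Set $v = (P-E-i\ep)^{-1}\langle x\rangle^{-s}f$, so that $\langle x\rangle^{2s}(P-E-i\ep)v = \langle x\rangle^s f \in L^2$, and Theorem \ref{Carleman mag schro 1D thm} applies:
\begin{equation*}
\int_\R \langle x\rangle^{-2s} e^{2\varphi/h}|v|^2\, dx \le C(h)\|f\|_{L^2}^2 + |\ep(h)|\, C(h)\|v\|_{L^2}^2.
\end{equation*}
For the last term, pair $(P-E-i\ep)v = \langle x\rangle^{-s}f$ with $v$ in $L^2(\beta^{-1}dx)$ and take imaginary parts: self-adjointness of $P$ on $L^2(\beta^{-1}dx)$ gives $|\ep(h)|\,\|v\|_{L^2(\beta^{-1}dx)}^2 = |\mathrm{Im}\langle \langle x\rangle^{-s}f, v\rangle_{L^2(\beta^{-1}dx)}|$. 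Combined with Cauchy--Schwarz and the uniform two-sided bounds on $\beta$ (from (1) and \eqref{unif inf alpha beta}), this yields $|\ep(h)|\|v\|_{L^2}^2 \lesssim \|f\|_{L^2}\|\langle x\rangle^{-s}v\|_{L^2}$. Young's inequality together with $\|\langle x\rangle^{-s}v\|^2 \le \int \langle x\rangle^{-2s} e^{2\varphi/h}|v|^2\, dx$ lets us absorb a factor of $\tfrac{1}{2}\|\langle x\rangle^{-s}v\|^2$ into the left-hand side, producing
\begin{equation*}
\|\langle x\rangle^{-s}v\|_{L^2}^2 \lesssim C(h)^2 \|f\|_{L^2}^2.
\end{equation*}

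What remains is to verify $C(h) \le e^{C_1/(2h)}$ under the uniform hypotheses. This requires revisiting the proof of Theorem \ref{Carleman mag schro 1D thm}: under (1)--(3) and (5), every constant appearing in the Carleman derivation is controlled by some uniform $L^\infty$, $\mathrm{BV}$, $L^1$, $L^2$ norm or by $\|V_0(h)\|$, and the only route by which the conjugation $P \mapsto e^{\varphi/h}Pe^{-\varphi/h}$ introduces $h$-exponential growth is through a factor $e^{2M/h}$. Since $M$ is $h$-independent, this gives the desired bound.

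The main obstacle is this last step: the quantitative bookkeeping of the $h$-dependence of $C(h)$ throughout the Carleman derivation. One must verify that outside a single factor $e^{2M/h}$, all constants arising are either uniform in $h$ or polynomial in $h^{-1}$ (and hence absorbed by the exponential). No new analytic idea is needed beyond those in the proof of Theorem \ref{Carleman mag schro 1D thm}, but the dependencies were not made explicit there and must now be tracked.
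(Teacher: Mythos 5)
Your overall strategy is the same as the paper's: apply Theorem~\ref{Carleman mag schro 1D thm} with an $h$-independent phase $\varphi$, absorb the $|\ep(h)|\,\|v\|^2$ error via the imaginary-part identity $\sgn(\ep)\imag\langle(P(h)-E-i\ep)v,v\rangle_{L^2(\beta^{-1}dx)} = |\ep|\,\|v\|_{L^2(\beta^{-1}dx)}^2$ together with Cauchy--Schwarz and Young, and then track the Carleman constant $C(h)$. The absorption step is carried out essentially as the paper does it.

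However, there is a genuine gap in your final paragraph. You assert that once the conjugation factor $e^{2M/h}$ is extracted, every remaining constant in the Carleman derivation is ``either uniform in $h$ or polynomial in $h^{-1}$.'' That is false, and it misses the second, more delicate source of exponential growth in $h^{-1}$: the weight $w_\eta$ itself. Looking back at \eqref{weta}--\eqref{Cw}, the constant $C_w = \sup_{x,\eta}|w_\eta|$ is of the form $\exp\bigl(\mu_c(\R) + \tfrac12\sum_j W_j\bigr)$ up to uniform factors, where the $W_j$ are taken proportional to the point masses $\mu_j$ of the measure $\mu$ in \eqref{mu}. But $\mu$ carries explicit $h^{-1}$ prefactors (on $\alpha^{-1}(b_1^2-b^2) + \beta^{-1}V_0$ and on $|\varphi'|(b^2+|b|)$), so under hypotheses (1)--(3) and (5) one only obtains $\mu(\R)\lesssim h^{-1}$, not $\mu(\R)\lesssim 1$. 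Consequently $C_w$ is of order $e^{C/h}$ rather than polynomial in $h^{-1}$, and this is precisely what the paper records by observing $\mu(\R)\le C_1 h^{-1}$ and choosing $W_j = M\mu_j$ with $M = \max(2, 8\tau^{-1})$ independent of $h$. Your final bound $C(h)\le e^{C_1/h}$ is still correct --- this second exponential factor is of the same form and causes no obstruction --- but the claimed dichotomy ``exponential only from the conjugation, everything else polynomial'' would fail the verification you yourself flag as the main remaining step, and the argument needs to account for $C_w$ explicitly as the paper does.
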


\begin{remark}
A feature of Corollaries \ref{ext resolv est corollary} and \ref{exp resolv est corollary}, which is a little bit stronger than typical semiclassical estimates, is that given \textit{any} $h_0 > 0$, rather than just \textit{some} $h_0 > 0$ small enough, the bounds \eqref{ext resolv est mag schro} and \eqref{resolv est mag schro} hold uniformly for all $h \in (0,h_0]$. In general the constants $C_0, \, C_1$ will depend on $h_0$ and grow as $h_0 \to \infty$.
\end{remark}

Estimates like \eqref{ext resolv est mag schro} and \eqref{resolv est mag schro} were proved previously for $\alpha,\, \beta = 1$, $V_1, \, b_0, \, b_1 = 0$ \cite{lash24}. Thus, the novelty of our present work is that \eqref{weighted est mag schro} implies optimal semiclassical resolvent bounds for a wide class of one dimensional operators that can have singular short range coefficients and discontinuous long range coefficients. 

The program of establishing optimal semiclassical estimates was initiated by Burq \cite{bu98, bu02}, where he proved an exponential bound like \eqref{resolv est mag schro} for a large class of operators with smooth coefficients in all dimensions. A weaker version of the exterior bound \eqref{ext resolv est mag schro} was given in \cite{bu02}, and then refined by Cardoso and Vodev \cite{cavo02}. Subsequent works have reduced the regularity and decay needed to have optimal bounds, see e.g. \cite{vo13, da14, ddh16, sh19, vo20c, gash22b, ob24}. Of particular salience to the present work is Vodev's recent paper \cite{vo25}, which establishes optimal semiclassical estimates in dimension three and higher for Schr\"odinger operators with long range electric and magnetic potentials.
 
To obtain \eqref{ext resolv est mag schro} and \eqref{resolv est mag schro} in dimension greater than one, the proofs usually require the long range coefficients to be Lipschitz continuous in the radial variable. For short range, $L^\infty$ electric or magnetic potentials, only versions of \eqref{resolv est mag schro} with additional losses are known \cite{klvo19, vo19, vo20a, vo20b, sh20, gash22a, sh24, vo25}.

If each of $V_0$, $V_1$ and $b_0$ are supported in $[-R_0, R_0]$, then from \eqref{limsup 1D} we may take $R_1 = R_0$, thus $R = R_0$ in \eqref{ext resolv est mag schro}. In higher dimensions, Datchev and Jin \cite{daji20} gave examples of smooth, compactly supported potentials $V$ for which the exterior estimate holds only if the chosen weight vanishes on a set much larger than the support of $V$.

The proof of Theorem \ref{Carleman mag schro 1D thm} in Section \ref{pf of main thm section} is structured as a positive commutator-type argument in the context of the so-called energy method. This strategy has long been used to prove Carleman and related estimates \cite{cavo02, da14, klvo19, dash20, gash22b, ob24}. As we work in one dimension, we begin from a pointwise-defined energy
\begin{equation} \label{F}
\begin{gathered} 
F(x) \defeq |(h \alpha \partial_x + ib)u|^2 + ( \beta^{-1} \alpha(E - V_1) + \alpha^2( \partial_x \varphi)^2 + b_1^2)|u|^2, \\
u = e^{\varphi/h}v, \qquad v = (P(h) - E - i\ep)^{-1} \langle x \rangle^{-s} f, \qquad f \in L^2(\R).
\end{gathered}
\end{equation}
\begin{remark}
In the special case $\alpha, \beta = 1$, $V_1, b_0, b_1 = 0$,  and $\varphi = 0$, the functional \eqref{F} simplifies to the one used in \cite{lash24}.  
\end{remark} 

The goal is to specify a weight $w(x)$ having locally bounded variation, so that $d(wF)$ is bounded from below by
\begin{equation} \label{display lwr bd}
-h^{-2} w |e^{\varphi/h} f|^2 + 2\ep h^{-1} \beta^{-1} w \imag((\overline{h\alpha \partial_xu + ibu})u)+ \langle x \rangle^{-2s} (|u|^2 + |(h\alpha \partial_x + ib)u|^2),
\end{equation}
plus a remainder, see \eqref{lwr bd BV dwF3}. The condition \eqref{general inf} allows us to obtain the third term in \eqref{display lwr bd}. Our placement of $b$ and $b_1$ in \eqref{F} takes advantage of the characterization \eqref{D} of the domain, and respects the symmetry of the first order part of \eqref{mag schro oned}. In computing $d(wF)$, several convenient cancellations occur, namely \eqref{real parts vanish}, and terms involving $f$ and $V_0$ appear (see \eqref{expand dF}). The $V_0$ term is part of the remainder because it is left out of \eqref{F}, as its derivative may be irregular. 

The total remainder we incur upon computing $d(wF)$ can be thought of as $-\mu$ for the positive, finite measure $\mu$ given by \eqref{mu}. The most concerning feature of $\mu$ is its discrete part $\mu_d$, stemming from the point masses of $V_0$, $dV_1$, $db_1$, $d\alpha$, and $d\beta$. However, because these measures are assumed finite, $\mu_d$ has at most countably many point masses which are absolutely summable. Thus $-\mu_d$ can be compensated for by choosing a weight $w$ that has an extra ``regularization". As in \cite{lash24}, $w$ depends on an additional parameter $\eta > 0$ in such a way that $dw = dw_\eta$ includes a Gaussian approximation of $\mu_d$. We show in Appendix \ref{deal with point masses appendix} how, after integration, the desired estimate holds in the limit as $\eta \to 0^+$. 


In higher dimensions, there are strict requirements on the type of weight that can be used in the positive-commutator argument, see \cite[Section 2]{sh24}. This is one of the main obstacles to proving sharp resolvent estimates for $L^\infty$ coefficients in higher dimensions.  

\subsection{Applications} In Section \ref{applications section} we present several standard applications of our results when $V$, $b$, $1 - \alpha$, and $1 - \beta$ are independent of $h$ and have compact support. In this setting, Theorem \ref{Carleman mag schro 1D thm} is well known to have consequences for the imaginary parts of the scattering resonances for 
\begin{equation} \label{H}
H \defeq \beta(x) (- \partial_x (\alpha(x) \partial_x ) + b(x) D_x + D_xb(x)) + V(x).
\end{equation}
As in \cite{sz91}, we define the resonances of $H$ as the poles of the cutoff resolvent
\begin{equation} \label{cutoff resolv}
    \chi (H - \lambda^2)^{-1} \chi : L^2(\R) \to \mathcal{D}, \qquad \chi \in C^\infty_0(\R ; [0,1]),
\end{equation}
which continues meromorphically from $\imag \lambda \gg 1$ to the complex plane.
\begin{theorem} \label{unif resolv est thm}
Suppose $V(x)$, $b(x)$, $\alpha(x)$ and $\beta(x)$ (independent of $h$) are as in \eqref{decompose electric potential} through \eqref{inf alpha beta}. Suppose also that $V$, $b$, $1 - \alpha$, and $1 - \beta$  are supported in $[-R_0, R_0]$ for some $R_0 >0$. Fix $\lambda_0 > 0$, as well as  $\chi \in C_0^\infty(\mathbb R; [0,1])$ such that $\chi =1$ near $[-R_0, R_0]$. There exist $C, \theta_0 > 0$ so that for all $|\real \lambda| \ge \lambda_0$ and $|\imag \lambda| \le \theta_0$,
 \begin{equation} \label{unif resolv est}
      \|\chi (H- \lambda^2)^{-1} \chi\|_{H^{k_1} \to H^{k_2}} \le C|\real \lambda |^{k_2- k_1- 1}, \qquad k_1, k_2 \in \{0, \, 1\} \,(H^0 \defeq L^2(\R)),
 \end{equation}
 and 
 \begin{equation} \label{unif resolv est D}
      \|\chi (H - \lambda^2)^{-1} \chi\|_{L^2 \to \mathcal{D}} \le C|\real \lambda |,
 \end{equation}
 where $\mathcal{D}$ is equipped with the graph norm $\| u \|_{\mathcal{D}} \defeq (\|Hu\|^2_{L^2} + \|u\|^2_{L^2})^{1/2}$.
\end{theorem}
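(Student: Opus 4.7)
The plan is to rescale the spectral problem to a semiclassical one with $h = 1/|\real\lambda|$, apply Corollary \ref{ext resolv est corollary} to obtain an exterior resolvent estimate, and then propagate it to the cutoff estimate by a standard commutator argument. Define the $h$-dependent coefficients $\tilde b(x,h) \defeq hb(x)$ and $\tilde V(x,h) \defeq h^2 V(x)$, keeping $\alpha, \beta$ unchanged. A direct computation shows that the resulting $P(h)$ equals $h^2 H$. Setting $E(h) \defeq h^2\real(\lambda^2) = 1 - h^2(\imag\lambda)^2$ and $\ep(h) \defeq h^2\imag(\lambda^2) = \pm 2h\imag\lambda$, we obtain $P(h) - E(h) - i\ep(h) = h^2(H-\lambda^2)$. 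For $\theta_0$ chosen small, $E(h) \in [1/2, 1]$ and $|\ep(h)| \le 2\theta_0/\lambda_0$ uniformly in $h \in (0, 1/\lambda_0]$. Since $V, b, 1-\alpha, 1-\beta$ are compactly supported in $[-R_0, R_0]$ and $h$-independent, scaling by powers of $h$ leaves all BV, $L^\infty$, $L^1$, and $L^2$ norms in the hypotheses of Corollaries \ref{ext resolv est corollary} and \ref{exp resolv est corollary} uniformly bounded; and since $V_1 = b_1 = 0$, $\alpha = \beta = 1$ outside $[-R_0, R_0]$, the infimum in \eqref{limsup 1D} equals $E(h) \ge 1/2$, yielding $R_1 = R_0$ and $\tau_1 \ge 1/2$.

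For $\imag\lambda > 0$ we have $\ep(h) > 0$, so Corollaries \ref{ext resolv est corollary} and \ref{exp resolv est corollary} apply directly; the cases $\imag\lambda \le 0$ follow by meromorphic continuation and limiting absorption, justified by the global exponential bound from Corollary \ref{exp resolv est corollary}. This yields
\begin{equation*}
\|\langle x\rangle^{-s}\mathbf{1}_{>R_0}(H-\lambda^2)^{-1}\mathbf{1}_{>R_0}\langle x\rangle^{-s}\|_{L^2\to L^2} \le \frac{C}{|\real\lambda|}.
\end{equation*}
To upgrade to \eqref{unif resolv est} with $k_1 = k_2 = 0$, let $u \defeq (H-\lambda^2)^{-1}\chi f$ and choose $\tilde\chi \in C_c^\infty(\R; [0,1])$ with $\tilde\chi \equiv 1$ on $\supp \chi$ and $\supp(\tilde\chi') \subseteq \{|x| > R_0\}$; then $(H-\lambda^2)(\tilde\chi u) = \chi f + [H, \tilde\chi] u$, where the commutator is a first-order differential operator with coefficients supported in the exterior free region (where $H = -\partial_x^2$). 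Decomposing $\chi f$ into its interior and exterior parts, the exterior part is controlled by the exterior estimate combined with the fact that $\langle x\rangle^s \chi$ is bounded, whereas the interior part (where $\chi = 1$, so $u$ solves the ODE $(H-\lambda^2)u = f$) is handled via a 1D $H^1$ energy estimate on $(-R_0, R_0)$, using boundary values $u(\pm R_0)$, $u'(\pm R_0)$ controlled by the exterior bound. This interior-exterior propagation, rigorously justified in the BV coefficient setting using the calculus reviewed in Section \ref{bv review section}, is the main technical obstacle of the proof.

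The remaining Sobolev estimates and the $\mathcal{D}$-norm bound follow by manipulating the PDE $(H-\lambda^2)u = \chi f$ together with the $L^2 \to L^2$ bound. For the $L^2 \to H^1$ estimate ($k_1 = 0, k_2 = 1$), test the equation against $\chi^2 \bar u$ and integrate by parts to obtain $\|\chi u'\|_{L^2} \lesssim \|f\|_{L^2}$. For \eqref{unif resolv est D}, observe $H(\chi u) = [H, \chi]u + \lambda^2 \chi u + \chi^2 f$, so combining the $L^2 \to L^2$ and $L^2 \to H^1$ bounds gives $\|H(\chi u)\|_{L^2} \le \|u\|_{H^1(\supp \chi')} + |\real\lambda|^2 \|\chi u\|_{L^2} + \|f\|_{L^2} \lesssim |\real\lambda| \|f\|_{L^2}$. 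The $H^1 \to L^2$ and $H^1 \to H^1$ cases are obtained from a resolvent identity comparing $(H-\lambda^2)^{-1}$ to the free resolvent $(-\partial_x^2 - \lambda^2)^{-1}$ outside $[-R_0, R_0]$; integrating by parts in the $H^1$ input against the explicit free kernel $\frac{i}{2\lambda}e^{i\lambda|x-y|}$ gains the additional power of $|\real\lambda|^{-1}$ needed for the claimed exponents.
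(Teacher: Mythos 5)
Your semiclassical rescaling ($h=|\real\lambda|^{-1}$, $\tilde V=h^2V$, $\tilde b=hb$, $E(h)=1-h^2(\imag\lambda)^2$, $\ep(h)=2h\sgn(\real\lambda)\imag\lambda$) is exactly the paper's starting point, and your derivation of the $L^2\to H^1$, $L^2\to\mathcal{D}$ bounds from the $L^2\to L^2$ bound is in the spirit of the paper's argument. But there are two genuine gaps at the core. First, the full cutoff $L^2\to L^2$ bound cannot be extracted from Corollary \ref{ext resolv est corollary}: the exterior estimate \eqref{ext resolv est mag schro} carries the cutoff $\mathbf{1}_{>R}$ on \emph{both} sides, so it says nothing about $u=(H-\lambda^2)^{-1}\chi f$ when the data $\chi f$ is supported near $[-R_0,R_0]$; in particular the boundary values $u(\pm R_0)$, $u'(\pm R_0)$ are \emph{not} ``controlled by the exterior bound,'' and the interior energy estimate you sketch has no input to close it. You yourself flag this interior--exterior propagation as ``the main technical obstacle,'' but that obstacle is precisely the content of the theorem's hardest step. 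The paper avoids it entirely: after rescaling, the long-range part of the potential is $h^2V_1=O(h^2)$, so for $h$ small the quantity in \eqref{tau} is bounded below on all of $\R$ with $\varphi\equiv 0$ (see \eqref{smaller range of h}); re-running the proof of Theorem \ref{Carleman mag schro 1D thm} (through \eqref{pre penult est}--\eqref{ult est}) with $\varphi\equiv 0$ and $h$-uniform constants then yields the \emph{global} weighted bound $\|\langle x\rangle^{-s}(\tilde H-E(h)-i\ep(h))^{-1}\langle x\rangle^{-s}\|\le Ch^{-1}$, i.e.\ a polynomial constant with no exponential loss and no interior/exterior splitting, which immediately gives the cutoff bound in the upper half plane.

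Second, the passage to $\imag\lambda\le 0$ does not follow ``by meromorphic continuation and limiting absorption, justified by the global exponential bound.'' Limiting absorption only yields bounds on the boundary values at real $\lambda$, and an exponential bound $e^{C/h}$ by itself produces at best an exponentially thin resonance-free neighborhood, not the fixed-width strip $|\imag\lambda|\le\theta_0$ asserted in \eqref{unif resolv est}. The paper handles this with Vodev's resolvent-identity argument (\cite[Theorem 1.5]{vo14}, carried out as in \cite[Section 6]{lash24}), which upgrades the upper-half-plane polynomial bounds to the same bounds in a strip below the real axis; some such argument is indispensable and is missing from your proposal. (A smaller divergence: for the $H^1\to L^2$ and $H^1\to H^1$ cases the paper argues by duality through the form domain, approximating $f\in H^1$ by $f_k\in\mathcal{D}$ and using \eqref{L2 H1 bd uhp} for the adjoint resolvent, rather than your free-kernel integration by parts; your route would also need the strip extension already in hand, so it does not repair the two gaps above.)
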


\begin{remark}
Since the hypotheses of Theorem \ref{unif resolv est thm} allow $\lambda_0$ to be any positive number, \eqref{unif resolv est D} precludes $\chi(H -\lambda^2)^{-1}\chi$ having a nonzero real resonance.
\end{remark}

In higher dimensions, high frequency resolvent bounds similar to \eqref{unif resolv est} and \eqref{unif resolv est D} were proved for Schr\"odinger operators with $L^\infty$ electric and magnetic potentials \cite{vo14b, mps20}. See also \cite{ccv13,ccv14}. An exponential high frequency resolvent bound for smooth potentials on non-compact Riemannian manifolds was recently proved in \cite{gr24}, extending results in \cite{cavo02}.

The existence of resonance free regions is a long-studied problem: \cite{ha82, zw87, hi99} treat the case of an electric potential only, with $V \in L^\infty_{\text{comp}}(\R)$, $V \in L_{\text{comp}}^1(\R)$, and $V$ exponentially decaying, respectively.  Several recent articles study resonance distribution for $h$-dependent Dirac masses \cite{sa16, dmw24, dama22} and, in higher dimensions, thin barriers \cite{gal19}. If $V \in L_{\text{comp}}^\infty(\R)$, one can use the classical Born series to show that the resonance free zone grows logarithmically in $|\real \lambda|$ \cite[Theorem 2.10]{dz}.  As far as the authors are aware, is it not known whether such a result holds for nontrivial magnetic potentials.

Our proof of Theorem \ref{unif resolv est thm} uses resolvent identities developed in \cite[Section 5]{vo14} for the Schr\"odinger operator with a compactly supported electric potential only, see also \cite{sh18, lash24}. But this strategy applies just as well in our setting, requiring only notational modification.

Estimates like \eqref{unif resolv est} yield integrability and decay for solutions to time-dependent equations involving $H$. 
\begin{corollary} \label{schro wave corollary}
Assume the hypotheses of Theorem \ref{unif resolv est thm} and in addition that $H$ has no resonance at $\lambda = 0$. Let $\mathbf{1}_{\ge 0}(H)$ be the orthogonal projection onto the nonnegative spectrum of $H$. There exist $C_1, \, C_2 > 0$ such that
\begin{gather}
\int_\R  \| \chi e^{i H t} \mathbf{1}_{\ge 0}(H) v \|^2_{L^2} dt \le C_1 \|v\|^2_{L^2}, \qquad v \in L^2(\R), \label{time integrability schro} \\
\| \chi \cos(\sqrt{|H|} t ) \mathbf{1}_{\ge 0}(H) \chi v \|_{L^2} \le e^{-C_2t}( \| v\|_{H^1} + \| \sqrt{|H|} \chi v\|_{L^2}), \qquad v \in H^1(\R),  \label{LED cosine wave} \\
\big \| \chi \frac{ \sin(\sqrt{|H|} t )}{\sqrt{|H|}} \mathbf{1}_{\ge 0}(H) \chi v  \big \|_{L^2} \le e^{-C_2t} \| v\|_{L^2}, \qquad v \in L^2(\R). \label{LED sine wave}
\end{gather}
\end{corollary}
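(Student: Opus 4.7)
Our plan is to deduce \eqref{time integrability schro}, \eqref{LED cosine wave}, and \eqref{LED sine wave} from Theorem \ref{unif resolv est thm} via the classical passage from resolvent bounds to time decay. The linchpin is a uniform resonance-free strip for the meromorphically extended cutoff resolvent $R_\chi(\lambda)\defeq \chi(H-\lambda^2)^{-1}\chi$. Since all coefficients of $H$ are compactly supported, \eqref{general inf} holds trivially for every $E \neq 0$, so Theorem \ref{Carleman mag schro 1D thm} with $\ep = 0$ forbids nonzero real eigenvalues of $H$; hence $R_\chi$ has no poles on $\R\setminus\{0\}$. The hypothesis that $\lambda = 0$ is not a resonance, together with discreteness of the resonance set, provides a disc around $0$ on which $R_\chi$ is holomorphic. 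Combined with the strip from Theorem \ref{unif resolv est thm} (valid for any $\lambda_0>0$), this yields $\Theta>0$ and $C>0$ so that $R_\chi$ is holomorphic on $\{|\imag \lambda|<\Theta\}$ with
\begin{equation*}
\|R_\chi(\lambda)\|_{L^2\to L^2}\le\frac{C}{1+|\real\lambda|},\qquad \|R_\chi(\lambda)\|_{L^2\to H^1}\le C,
\end{equation*}
extending \eqref{unif resolv est} and \eqref{unif resolv est D} across the entire strip.

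Given this strip, \eqref{time integrability schro} follows from Kato's smoothness theorem: it suffices to verify $\sup_{\mu>0,\,\ep>0}\|\chi\,\imag(H-\mu-i\ep)^{-1}\chi\|_{L^2\to L^2}<\infty$, which comes from the strip bound upon setting $\lambda=\sqrt{\mu+i\ep}$ (principal branch). For \eqref{LED cosine wave} and \eqref{LED sine wave}, Stone's formula with the change of variable $\mu=\lambda^2$ represents the wave propagators as
\begin{equation*}
\chi f(\sqrt{|H|}\,t)\mathbf{1}_{\ge 0}(H)\chi v = \frac{2}{\pi}\int_0^\infty f(\lambda)\,\lambda\,\imag R_\chi(\lambda)\,v\,d\lambda,
\end{equation*}
with $f(\lambda)=\cos(\lambda t)$ or $\sin(\lambda t)/\lambda$. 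Using $R_\chi(\lambda)^*=R_\chi(-\lambda)$ on $\R$ and the even/odd symmetries, recast each integral over $\R$ in terms of $e^{\pm i\lambda t}R_\chi(\lambda)$ and deform into $\R\pm i\Theta'$, with $0<\Theta'<\Theta$. The resonance-free strip rules out residue contributions, and $e^{\pm i\lambda t}$ supplies the factor $e^{-\Theta' t}$. Convergence of the deformed integrals is secured by the strip bounds together with the initial-data norms on the right of \eqref{LED cosine wave} and \eqref{LED sine wave}: for the cosine, the $H^1$ norm of $v$ and $\|\sqrt{|H|}\chi v\|_{L^2}$ absorb the extra power of $\lambda$ in the integrand; for the sine, $L^2$ data suffice. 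This yields $C_2 = \Theta'$.

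The principal obstacle is the first step: bridging the high-frequency control of Theorem \ref{unif resolv est thm} with the low-frequency behavior near $\lambda=0$, where the no-zero-resonance hypothesis is essential, to produce a single strip of uniform width across all real parts of $\lambda$ together with the polynomial-in-$|\real\lambda|$ resolvent bound. Once this is secured, Kato smoothing and the contour-deformation argument are classical, with analogous one-dimensional computations appearing in \cite{lash24}.
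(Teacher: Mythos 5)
Your overall architecture matches the paper's: use the no-zero-resonance hypothesis together with Theorem \ref{unif resolv est thm} to get a uniform resonance-free strip with polynomial resolvent bounds, then pass from the resolvent to the propagators. For \eqref{time integrability schro}, your appeal to Kato's smoothing criterion is an equivalent but different packaging of what the paper does by hand (a $TT^*$ argument via Stone's formula, Fubini, and Plancherel, with $\varphi(H)$ cutoffs and monotone convergence to handle the projection $\mathbf{1}_{\ge 0}(H)$); either route is fine provided you take care to restrict to the nonnegative spectrum as in the statement.

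However, your argument for \eqref{LED cosine wave} and \eqref{LED sine wave} has a genuine gap. After writing the propagator via Stone's formula, you propose to deform the entire $\lambda$-contour from $\R$ to $\R - i\Theta'$ and claim the resulting integral converges because the initial-data norms ``absorb the extra power of $\lambda$.'' They do not: for the cosine case the integrand is $\lambda e^{-it\lambda}\bigl[R(\lambda)-R(-\lambda)\bigr]\chi v$, and the best bound available from \eqref{unif resolv est} on $\imag\lambda=-\Theta'$ is $\|\lambda R(\lambda)\|_{H^1\to L^2}\le C(1+|\real\lambda|)^{-1}$, so the deformed integral is at best $O\bigl(\int(1+|\real\lambda|)^{-1}d\real\lambda\bigr)$, which diverges; the sine case with $L^2$ data has the same issue. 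There is no absolute convergence on an unbounded horizontal contour, and the oscillation $e^{-it\real\lambda}$ is not enough on its own without further manipulation. The paper sidesteps this by first splitting $\mathbf{1}_{\ge 0}(H)=\mathbf{1}_{[0,X(t)]}(H)+\mathbf{1}_{\ge X(t)}(H)$ with a \emph{time-dependent} spectral cutoff $X(t)=e^{C_3 t}$: the high-frequency piece decays exponentially directly from the spectral theorem, and only the low-frequency piece is put through Stone's formula, producing a contour of finite length $2X(t)$ whose deformation yields a horizontal contribution $O(X(t)e^{-\theta_1 t})$ plus two vertical side contributions $O(X(t)^{-1})$, both exponential in $t$ once $C_3$ is chosen smaller than the strip width. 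Your proposal is missing this truncation device (or some substitute, such as dyadic decomposition plus summation), so the contour-deformation step as written does not close.
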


\begin{remark}
In Appendix \ref{no zero resonance appendix}, we give simple examples of operators that do not have a resonance at zero. It seems challenging to find very general sufficient conditions on $H$'s coefficients that rule out a zero resonance.
\end{remark}

The proof of Corollary \ref{schro wave corollary} is a straightforward application of \eqref{unif resolv est} to Stone's formula, which represents the Schr\"odinger and wave propagators in terms of the limiting values of the resolvent, see \eqref{Stone's formula schro propagator} and \eqref{Stone's formula cosine propagator}. We expect Corollary \ref{schro wave corollary} can be improved in several ways, in particular, upgrading the $L^2$-norms on the right sides of \eqref{time integrability schro} and \eqref{LED cosine wave} to the $H^{1/2}$- and $H^1$- norm, respectively. See \cite[Section 2.3]{bgt04} and \cite[Section 7]{lash24}. Another question is whether similar integrability or decay holds for non-compactly supported coefficients. For this, one approach is to establish limiting absorption bounds for the weighted square of the resolvent \cite[Section 3]{cavo04}.

\medskip
\noindent{\textsc{Acknowledgements:}} We thank Georgi Vodev for helpful discussions, and the anonymous referee, whose thoughtful comments contributing to improving the paper. Both authors gratefully acknowledge support from NSF DMS-2204322. J. S. was also supported by a University of Dayton Research Council Seed Grant.\\

\section{Notation and review of BV} \label{bv review section}

To keep notation concise, for the rest of the article, we use ``prime" notation to denote differentiation with respect to $x$, e.g., $u' \defeq \partial_x u$.

In this section we collect several elementary properties of BV functions, which are used frequently in later sections. Proofs of Propostions \ref{prod rule bv prop} and \ref{chain rule bv prop} are given in \cite[Appendix B]{dash22}. The proof of Proposition \ref{ftc bv prop} appears in \cite[Section 2]{lash24}.

Suppose $f : \R \to \C$ is a function of locally bounded variation, in the sense that both the real and imaginary parts of $f$ are a difference of (not necessarily bounded) increasing functions. For all $x \in \R$, let 
\begin{equation*} 
 f^L(x) \defeq \lim_{\delta \to 0^+}f(x-\delta), \qquad  f^R(x) \defeq \lim_{\delta \to 0^+}f(x+\delta), \qquad f^A(x) \defeq (f^L(x) + f^R(x))/2,
\end{equation*}
be the left- and right-hand limits, and average value of $f$, respectively. Recall that $f$ is differentiable Lebesgue almost everywhere, so $f(x)= f^L(x) = f^R(x) = f^A(x)$ for almost all $x \in \R$.

We may decompose $f$ as 
\begin{equation*} 
f = f_{r, +} - f_{r, -} + i( f_{i,+} - f_{i,-}),
\end{equation*}
where the $f_{\sigma,\pm}$, $\sigma \in \{r, i\}$, are increasing functions on $\R$. Each $f^R_{\sigma,\pm}$ uniquely determines a regular Borel measure $\mu_{\sigma,\pm}$ on $\R$ satisfying $\mu_{\sigma, \pm}(x_1, x_2] = f^R_{\sigma, \pm}(x_2) -  f^R_{\sigma, \pm}(x_1)$, see \cite[Theorem 1.16]{fo}. We put
\begin{equation} \label{df}
    df \defeq \mu_{r, +} - \mu_{r, -} + i( \mu_{i,+} - \mu_{i,-}),
\end{equation}
 which is a complex measure when restricted to any bounded Borel subset. For any $a  < b$,
 
 \begin{equation} \label{ftc}
 \begin{gathered}
  \int_{(a,b]}df = f^R(b) - f^R(a),\\
  \int_{(a,b)}df = f^L(b) - f^R(a).
  \end{gathered}
 \end{equation}
 
 \begin{proposition}[product rule] \label{prod rule bv prop}
Let $f, \, g : \R \to \C$ be functions of locally bounded variation. Then
\begin{equation}\label{e:prod}
 d(fg) = f^A dg + g^A df
\end{equation}
as measures on a bounded Borel subset of $\R$.
\end{proposition}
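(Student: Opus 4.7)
The plan is to establish the identity by showing that the two locally finite complex measures $d(fg)$ and $f^A\,dg + g^A\,df$ assign the same value to every half-open interval $(a,b]$; uniqueness of locally finite Borel measures determined by their values on such intervals (a standard Carath\'eodory/$\pi$-$\lambda$ argument) then forces equality on every bounded Borel set. By decomposing $f$ and $g$ into real and imaginary parts and then into differences of increasing functions, and exploiting linearity in each slot, I would reduce immediately to the case where $f$ and $g$ are real-valued BV functions, so that $df$ and $dg$ are signed measures that are finite on bounded intervals.

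Fix $a<b$ and set $A = f^R(a)$, $B = f^R(b)$, $C = g^R(a)$, $D = g^R(b)$. Using \eqref{ftc} applied to the product $fg$ (which is also BV locally), the left-hand side computes to
\begin{equation*}
\int_{(a,b]} d(fg) = (fg)^R(b) - (fg)^R(a) = BD - AC,
\end{equation*}
since right-hand limits are multiplicative. For the right-hand side, I would invoke Fubini on the product measure $df \otimes dg$ on the square $(a,b]^2$, which is legitimate because both factors are finite on bounded sets. Writing $f^R(y) = A + \int_{(a,y]} df$ and $f^L(y) = A + \int_{(a,y)} df$, and averaging, I get
\begin{equation*}
\int_{(a,b]} f^A\,dg = A(D-C) + \iint_{x<y} df(x)\,dg(y) + \tfrac{1}{2}\iint_{x=y} df(x)\,dg(y),
\end{equation*}
and symmetrically (after swapping the roles of $f$ and $g$, and using that the product measure is symmetric under interchange of variables)
\begin{equation*}
\int_{(a,b]} g^A\,df = C(B-A) + \iint_{y<x} df(x)\,dg(y) + \tfrac{1}{2}\iint_{x=y} df(x)\,dg(y).
\end{equation*}

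Adding these, the three regions $\{x<y\}$, $\{x=y\}$, $\{x>y\}$ recombine into the full square, so the combined double integral equals $\iint_{(a,b]^2} df\,dg = (B-A)(D-C)$. A short arithmetic check then gives
\begin{equation*}
A(D-C) + C(B-A) + (B-A)(D-C) = BD - AC,
\end{equation*}
matching the value of $\int_{(a,b]} d(fg)$ computed above. Since $a<b$ were arbitrary, the two measures agree on the $\pi$-system of bounded half-open intervals, hence on every bounded Borel subset of $\R$, which is exactly \eqref{e:prod}. The only delicate point is the bookkeeping at the diagonal $\{x=y\}$: the product measure assigns mass $\sum_x \Delta f(x)\Delta g(x)$ there (summing over the countable common jump set), and it is precisely the average $f^A$ (as opposed to $f^L$ or $f^R$) that makes the two half-contributions add up to the full diagonal, which is why the identity needs $f^A$ and $g^A$ rather than one-sided limits. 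Keeping that diagonal piece under control is where I expect the main care is required; everything else is routine manipulation of Lebesgue--Stieltjes integrals.
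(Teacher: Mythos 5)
Your proof is correct. Note that the paper does not give its own proof of Proposition \ref{prod rule bv prop}; it cites Appendix~B of \cite{dash22}, so there is nothing in the present text to compare against directly. Your argument is the standard Lebesgue--Stieltjes integration-by-parts proof via Fubini on the product measure $df\otimes dg$ over $(a,b]^2$, and every step holds up: the reduction to real-valued BV via linearity is valid; $(fg)^R=f^Rg^R$ so $\int_{(a,b]}d(fg)=BD-AC$ by \eqref{ftc}; writing $f^A(y)=A+\int_{(a,y)}df+\tfrac12\,df(\{y\})$ (again by \eqref{ftc}) and integrating against $dg$, together with the symmetric expression for $\int g^A\,df$, recovers $\iint_{(a,b]^2}df\,dg=(B-A)(D-C)$ plus the boundary terms $A(D-C)+C(B-A)$, and the algebra gives $BD-AC$. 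The diagonal accounting is exactly right: the two half-weights $\tfrac12\sum\Delta f\,\Delta g$ on $\{x=y\}$ are what force the averages $f^A,g^A$ rather than one-sided limits. Finally, since both sides are finite complex measures on any bounded Borel set and agree on the $\pi$-system of bounded half-open intervals, they agree on all bounded Borel sets. No gaps.
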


\begin{proposition}[chain rule] \label{chain rule bv prop}
Let $f : \R \to \R$ be continuous and have locally bounded variation. Then, as measures on a bounded Borel set of $\R$,
\begin{equation} \label{chain rule continuous}
    d(e^f) = e^{f} df.
\end{equation}
\end{proposition}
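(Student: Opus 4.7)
The plan is to derive the chain rule by first proving it for polynomials of $f$ via Proposition \ref{prod rule bv prop}, and then extending to the exponential by uniform Taylor-series approximation on bounded sets.

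First I would establish by induction on $n \ge 1$ that $d(f^n) = n f^{n-1}\, df$ as measures on every bounded Borel subset of $\R$. The base case is trivial; for the step, Proposition \ref{prod rule bv prop} gives $d(f \cdot f^{n-1}) = f^A \, d(f^{n-1}) + (f^{n-1})^A\, df$, and continuity of $f$ forces $f^A = f$ and $(f^{n-1})^A = f^{n-1}$ pointwise, so the inductive hypothesis collapses this to $n f^{n-1}\, df$. By linearity, if $S_N(x) \defeq \sum_{n=0}^N f(x)^n / n!$, then $d S_N = S_{N-1} \, df$ on any bounded Borel set.

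Next I would pass to the limit $N \to \infty$. Fix a bounded Borel set $E \subseteq [-M, M]$ and set $K \defeq \sup_{[-M,M]} |f|$ and $T \defeq \mathrm{TV}(f, [-M, M])$; both are finite because $f$ is continuous and locally BV. The factorization $f(x)^n - f(y)^n = (f(x) - f(y))\sum_{j=0}^{n-1} f(x)^{j} f(y)^{n-1-j}$ yields the pointwise Lipschitz bound $|f(x)^n - f(y)^n| \le n K^{n-1} |f(x) - f(y)|$, hence $\mathrm{TV}(f^n, [-M,M]) \le n K^{n-1} T$. Consequently the Taylor tails $\sum_{n > N} f^n / n!$ converge to zero in BV norm on $[-M,M]$, so $e^f$ is locally BV, $S_N \to e^f$ uniformly, and $d S_N \to d(e^f)$ in total variation on $E$. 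The uniform convergence together with $|df|(E) \le T < \infty$ also gives $S_{N-1}\, df \to e^f\, df$ in total variation on $E$, and equating the two limits yields \eqref{chain rule continuous}.

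The main technical point is controlling the total variation---rather than just the sup norm---of the Taylor partial sums. This is where the continuity hypothesis earns its keep beyond enforcing $f^A = f$: it supplies the uniform bound $K$ on compact sets, which is what drives the rapid decay of $\mathrm{TV}(f^n/n!, [-M, M])$ needed for BV-norm convergence.
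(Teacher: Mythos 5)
Your proof is correct. The paper does not reproduce a proof of Proposition~\ref{chain rule bv prop}; it defers to~\cite[Appendix~B]{dash22}, so a line-by-line comparison is not possible here, but your argument is self-contained given the product rule Proposition~\ref{prod rule bv prop}, which is exactly the toolkit the paper makes available. The route you take -- establish $d(f^n)=nf^{n-1}\,df$ inductively from the product rule using continuity to collapse $f^A$ to $f$, then pass the Taylor series of $\exp$ through the derivative by controlling total variation rather than just sup norm -- is the natural elementary way to bootstrap the chain rule from the product rule, and your observation that the estimate $\mathrm{TV}(f^n,[-M,M])\le nK^{n-1}T$ makes the tails summable in BV norm is precisely the point that makes the limiting step legitimate. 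The one small thing worth making explicit (you use it implicitly when you say $dS_N\to d(e^f)$ in total variation) is that for a \emph{continuous} BV function $h$ on $[-M,M]$, the pointwise total variation $\mathrm{TV}(h,[-M,M])$ agrees with $|dh|([-M,M])$, so BV-norm convergence of $S_N\to e^f$ does give total-variation convergence of the measures; continuity of $S_N$ and $e^f$ is what rules out a discrepancy coming from one-sided limits at jump points. With that noted, the argument is complete and goes through as written.
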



\begin{proposition}[fundamental theorem of calculus] \label{ftc bv prop}
Let $\mu_{\sigma, \pm}$, $\sigma \in \{r,\,i\}$ be positive Borel measures on $\R$ which are finite on all bounded Borel subsets of $\R$. Suppose $u \in  \mathcal{D}'(\R)$ has distributional derivative equal to $\mu = \mu_{r, +} - \mu_{r, -} + i( \mu_{i,+} - \mu_{i,-})$. Then $u$ is of locally BV. For any $a \in \R$, $u$ differs by a constant from the right continuous, locally BV function
\begin{equation} \label{funct from meas}
f_\mu(x) \defeq \begin{cases}
\int_{[a, x]} d\mu & x \ge a, \\
-\int_{(x,a)} d\mu & x < a. 
\end{cases}
\end{equation}
\end{proposition}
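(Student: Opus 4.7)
The plan is to verify directly that $f_\mu$ has distributional derivative equal to $\mu$, so that $u - f_\mu$ has zero derivative in $\mathcal{D}'(\R)$ and must therefore equal a constant. Right-continuity and local bounded variation of $f_\mu$ will then transfer to $u$ modulo this constant.

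First I would confirm that $f_\mu$ is right-continuous and of locally BV. Right-continuity at a point $x_0 \ge a$ follows from continuity from above of the finite positive measures $\mu_{\sigma,\pm}$: if $x_n \searrow x_0$, then $f_\mu(x_n) - f_\mu(x_0) = \int_{(x_0, x_n]} d\mu \to 0$. An analogous computation handles $x_0 < a$, and right-continuity at the seam $x = a$ is built into the convention $f_\mu(a) = \mu(\{a\})$. Local BV is immediate since on any bounded interval $f_\mu$ is a finite signed combination of the monotone functions $x \mapsto \mu_{\sigma,\pm}([a, x])$ (for $x \ge a$) and $x \mapsto -\mu_{\sigma,\pm}((x, a))$ (for $x < a$).

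The heart of the argument is the distributional identity $f_\mu' = \mu$. Given $\phi \in C_c^\infty(\R)$, choose $N > |a|$ large enough that $\supp \phi \subset (-N, N)$. I would split $\int_\R f_\mu \phi' \, dx$ at $a$ and apply Fubini's theorem (each iterated integral is over a bounded rectangle with $\phi'$ bounded and $|\mu|$ finite, so absolute integrability holds):
\begin{align*}
\int_a^N f_\mu(x)\phi'(x)\,dx &= \int_{[a,N]} \int_y^N \phi'(x)\,dx\, d\mu(y) = -\int_{[a,N]} \phi(y)\,d\mu(y), \\
\int_{-N}^a f_\mu(x)\phi'(x)\,dx &= -\int_{(-N,a)} \int_{-N}^y \phi'(x)\,dx\, d\mu(y) = -\int_{(-N,a)} \phi(y)\,d\mu(y),
\end{align*}
using $\phi(\pm N) = 0$. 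Summing yields $\int_\R f_\mu \phi'\,dx = -\int_\R \phi\,d\mu$, which is the definition of $f_\mu' = \mu$ in $\mathcal{D}'(\R)$.

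The step I expect to be most delicate is the Fubini bookkeeping at the seam $y = a$: the asymmetric choice of closed interval $[a,x]$ versus open interval $(x,a)$ in the two branches of $f_\mu$ is precisely what ensures a possible point mass of $\mu$ at $a$ is counted exactly once as the two halves reassemble into $-\int_\R \phi \, d\mu$. Once $f_\mu' = \mu$ is in hand, $u - f_\mu$ is a distribution with vanishing derivative on $\R$ and therefore equals some constant $c$. Hence $u$ coincides almost everywhere with the locally BV function $f_\mu + c$, and in particular $u$ is itself of locally BV.
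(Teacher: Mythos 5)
The paper does not contain its own proof of this proposition; it cites \cite[Section 2]{lash24} and moves on. So there is no in-paper argument to compare against. Evaluating your proposal on its own terms, the argument is correct and complete. The Fubini computation is the right approach, and the accounting at the seam $x=a$ works exactly as you say: for $x\ge a$ one gets $\int_a^N f_\mu\phi'\,dx=\int_{[a,N]}\big(\int_y^N\phi'\,dx\big)\,d\mu(y)=-\int_{[a,N]}\phi\,d\mu$, while for $x<a$ one gets $-\int_{(-N,a)}\phi\,d\mu$, and $[a,N]\sqcup(-N,a)=(-N,N]$ covers $\supp\phi$ without double-counting a possible atom of $\mu$ at $a$. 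The right-continuity check is also sound, since $\lim_{x\searrow a}\mu_{\sigma,\pm}([a,x])=\mu_{\sigma,\pm}(\{a\})=f_{\mu_{\sigma,\pm}}(a)$ and $\lim_{x\nearrow a}\mu_{\sigma,\pm}((x,a))=0$, so the jump at $a$ is $\mu(\{a\})$ and each $x\mapsto \mu_{\sigma,\pm}([a,x])$ (resp. $-\mu_{\sigma,\pm}((x,a))$) glues to a single right-continuous nondecreasing function. The final step, that a distribution on $\R$ with vanishing derivative is a constant, is the standard du Bois-Reymond lemma. One could make the Fubini invocation marginally more pedantic by applying it to each positive measure $\mu_{\sigma,\pm}$ separately before recombining, but since each is finite on bounded sets and $\phi'$ is bounded with compact support, this is routine bookkeeping and not a gap.
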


\section{Proof of Theorem \ref{Carleman mag schro 1D thm} } \label{pf of main thm section}
In this section we prove Theorem \ref{Carleman mag schro 1D thm}. As discussed in Section \ref{intro section}, our proof is based on the energy method, which has long been used to establish limiting absorption resolvent estimates (see \cite{cavo02, da14, dash20, gash22a}). The starting point is the pointwise energy functional \eqref{F}.

Our calculations are facilitated by our characterization \eqref{D} of the domain of $P(h)$. In Lemma \ref{self-adjointness 1D lemma} in  Appendix \ref{self adj oned section}, we show $P(h)$ is self-adjoint with respect to \eqref{D}. Before examining the proof of Theorem \ref{Carleman mag schro 1D thm}, the reader may find it useful to first consult the proof Lemma \ref{self-adjointness 1D lemma}, to become acquainted with applying the properites of BV functions reviewed in Section \ref{bv review section}. 

\begin{proof}[Proof of Theorem \ref{Carleman mag schro 1D thm}]
In the following calculations, given $v \in \mathcal{D}$ as in \eqref{F}, we work with fixed representatives of $v$ and $h\alpha v' + ibv$ such that $v$ and $h\alpha v' + ibv$ are continuous and of locally bounded variation, respectively. This is permitted by \eqref{D}. By continuity, $v^A = v$ and $u^A = (e^{\varphi/h} v)^A = u$. By modifying  $h\alpha u' + ibu$ on a set of Lebesgue measure zero, we may suppose without loss of generality that $(h\alpha u' + ibu)^A = h\alpha u' + ibu$ too.

We compute $dF$, as a measure on a bounded Borel subset, using \eqref{e:prod}:
\begin{equation} \label{dF}
\begin{split}
dF &- |u|^2d(\beta^{-1} \alpha(E - V_1) + \alpha^2 (\varphi')^2 + b_1^2) \\
  &= 2 \real(( \overline{ h\alpha u' +i bu}) d(h\alpha u' + ibu)) + 2( \beta^{-1} \alpha(E - V_1) +\alpha^2 (\varphi')^2 + b_1^2) \real(\overline{u}' u) \\
  &= -2h^{-1} \real(( \overline{ h\alpha u' +i bu}) d(-h^2\alpha u' - ihbu)) \\
  &+ 2h^{-1} \real((\overline{h \alpha u' +i bu}) (\beta^{-1} (E - V_1) + \alpha(\varphi')^2)u)+ 2h^{-1} \alpha^{-1}b_1^2 \real((\overline{h \alpha u' +i bu})u).
\end{split}
\end{equation}
Now expand $d(-h^2\alpha u' - ihbu)$ by using \eqref{e:prod}, $u = e^{\varphi/h}v$, $u' = h^{-1} \varphi' u + e^{\varphi/h}v'$:
\begin{equation} \label{apply d}
\begin{split}
d(-h^2 \alpha u' - ihbu) &= d(e^{\varphi/h}(-h^2 \alpha v' - ihbv) -h \alpha \varphi'  u) \\
&= e^{\varphi/h} d(-h^2 \alpha v' - ihbv) -  e^{\varphi/h}\varphi' (2h \alpha v' + i b v) \\
 &- \alpha (\varphi')^2u - h \alpha^A u d(\varphi')  - h (\varphi')^A u d\alpha. 
 \end{split}
 \end{equation}
 Into the right side of \eqref{apply d} substitute $d(-h^2 \alpha v' - ihbv) = \beta^{-1} P(h)v - h bD_xv -\beta^{-1}vV$ and $2he^{\varphi/h} \alpha \varphi' v' = -2 \alpha (\varphi')^2 u + 2h \alpha \varphi' u'$, to get 
 \begin{equation} 
 \begin{split} \label{apply d pt 2}
d(-h^2 \alpha u' - ihbu) &=e^{\varphi/h} (\beta^{-1}P(h)v -hb D_x v - \beta^{-1}vV) - 2h \alpha \varphi' u' - ib \varphi'u\\
 &+\alpha(\varphi')^2u - h \alpha^A u d (\varphi')  - h (\varphi')^A u d\alpha \\
 &= e^{\varphi/h}  \beta^{-1} P(h)v -  \beta^{-1}uV + ihbu' - 2h \alpha \varphi' u' \\
 &+\alpha(\varphi')^2u - h \alpha^A u d(\varphi') -2ib \varphi'u - h (\varphi')^A u d\alpha.
\end{split}
\end{equation}
Inserting \eqref{apply d pt 2} into the right side of \eqref{dF} yields
\begin{equation*} 
\begin{split}
dF &- |u|^2d(\beta^{-1} \alpha(E - V_1) + \alpha^2 (\varphi')^2 + b_1^2) \\
  &= -2 h^{-1} \real(( \overline{h\alpha u' + i b u})(e^{\varphi/h} \beta^{-1} P(h)v - \beta^{-1} uV))\\
  &+ 2h^{-1} \real((\overline{h\alpha u' + i b u})(-ihbu' + 2h \alpha \varphi' u'
 -\alpha(\varphi')^2 u + h \alpha^A u d(\varphi') + 2ib\varphi'u + h (\varphi')^A u d\alpha))\\
  &+  2h^{-1} \real((\overline{h \alpha u' +i b u}) (\beta^{-1} (E - V_1) + \alpha (\varphi')^2)u))+ 2h^{-1} \alpha^{-1}b_1^2 \real((\overline{h \alpha u' +i bu})u).
  \end{split}
\end{equation*}
Now we simplify terms where convenient. In particular, 
\begin{equation} \label{real parts vanish}
\begin{split}
-2&h^{-1} \real((\overline{h\alpha u' + ibu})(ihbu')) + 2h^{-1} \alpha^{-1}b_1^2 \real((\overline{h \alpha u' +i bu})u)\\
&= -2h^{-1} \real((\overline{h\alpha u' + ibu})(ib\alpha^{-1}(h \alpha u' + ibu) + \alpha^{-1} b^2u) + 2h^{-1} \alpha^{-1}b_1^2 \real((\overline{h \alpha u' +i bu})u\\
&=  2h^{-1} \alpha^{-1} (b^2_1 - b^2) \real((\overline{h \alpha u' +i b u})u).
\end{split}
\end{equation}
We also add and subtract the term $-2 h^{-1} \beta^{-1} \real (( \overline{h\alpha u' + i b u})i \ep u) = 2 \ep h^{-1} \beta^{-1} \imag(( \overline{h\alpha u' + i b u})u)$. The output is
\begin{equation*} 
\begin{split}
 dF  &- |u|^2d(\beta^{-1} \alpha(E - V_1) + \alpha^2 (\varphi')^2 + b_1^2) \\
 &=-2h^{-1} \real( ( \overline{h\alpha u' + i b u})e^{\varphi/h} \beta^{-1} (P(h) -E - i\ep)v) + 2 \ep h^{-1} \beta^{-1} \imag(( \overline{h\alpha u' + i b u})u)  \\
  &+4 h \alpha^2 \varphi' |u'|^2 - 4 \alpha b \varphi' \imag( \overline{u}' u)\\
  &+ 2h^{-1}\real((\overline{h\alpha u' + i b u})(h \alpha^A u d(\varphi') +2ib\varphi'u + h(\varphi')^A u d\alpha ) \\
  &+2 h^{-1} \real((\overline{h \alpha u' + ibu})u)( \alpha^{-1} (b_1^2 - b^2)  + \beta^{-1} V_0). 
  \end{split}
  \end{equation*}
  Using $v = (P(h) - E - i\ep)^{-1} \langle x \rangle^{-s} f$ and the identity
  \begin{equation*}
\begin{split}
 - 4 \alpha b \varphi' \imag( \overline{u}' u) &+ 2h^{-1}\real((\overline{h\alpha u' + i b u}) 2ib\varphi'u) \\
 &= -4 h^{-1} b^2 \varphi' |u|^2 - 8h^{-1} b \varphi' \imag((\overline{h\alpha u' + i b u})u),
 \end{split}
\end{equation*}
we arrive at 
\begin{equation} \label{expand dF}
\begin{split}
dF  &- |u|^2d(\beta^{-1} \alpha(E - V_1) + \alpha^2 (\varphi')^2 + b_1^2) \\
  &= -2h^{-1} \real( ( \overline{h\alpha u' + i b u})e^{\varphi/h} \beta^{-1} \langle x\rangle^{-s} f) + 2 \ep h^{-1} \beta^{-1} \imag(( \overline{h\alpha u' + i b u})u)  \\
  &+4 h \alpha^2 \varphi' |u'|^2 -4 h^{-1} b^2 \varphi' |u|^2 - 8h^{-1} b \varphi' \imag((\overline{h\alpha u' + i b u})u)  \\
    &+2 h^{-1} \real((\overline{h \alpha u' + ibu})u)( h \alpha^A d(\varphi') + h(\varphi')^A d\alpha + \alpha^{-1} (b_1^2 - b^2)  + \beta^{-1} V_0).
\end{split}
\end{equation}

Momentarily, we shall define a continuous weight $w(x)$ which is bounded, has locally BV, $dw \ge 0$, and $w \varphi' \ge 0$. For such a $w$, \eqref{e:prod} and \eqref{expand dF} imply
\begin{equation*}
\begin{split}
d&(wF)\\
 &= F^A dw + w dF \\
&=  (|h \alpha u' + ibu|^2)^Adw + (\beta^{-1} \alpha(E - V_1) + \alpha^2 (\varphi')^2 +b_1^2)^A|u|^2dw \\ 
& -2 h^{-1} w \real((\overline{h \alpha u' +i bu}) e^{\varphi/h} \langle x \rangle^{-s} f)  + 2 \ep h^{-1} \beta^{-1} w \imag ((\overline{h \alpha u' +i bu}) u)\\
 &+4 h \alpha^2  \varphi' w |u'|^2 -4h^{-1}b^2 \varphi'w|u|^2 - 8h^{-1} b \varphi' w \imag((\overline{h\alpha u' + i b u})u) \\
    &+2 h^{-1}w \real((\overline{h \alpha u' + ibu})u) (h \alpha^A d(\varphi') + h(\varphi')^A d\alpha + \alpha^{-1} (b_1^2 - b^2)  + \beta^{-1} V_0) \\
    & + w |u|^2d(\beta^{-1}\alpha(E - V_1) + \alpha^2 (\varphi')^2 + b_1^2). 
    \end{split}
    \end{equation*}
 To find a lower bound for $d(wF)$, we discard the term $4 h \alpha^2  \varphi' w |u'|^2$ since $\varphi' w \ge 0$, use \eqref{tau}, and bound from below some terms involving $h \alpha u' +i bu$. We find
    \begin{equation}  \label{dwF}
    \begin{split}
   d(wF) & \ge  \tau|u|^2 dw  -  |w| |u|^2(4h^{-1}b^2 |\varphi'| + |d(\beta^{-1} \alpha(E - V_1) + \alpha^2 (\varphi')^2 + b_1^2)|)  \\
    &+ (|h \alpha u' + ibu|^2)^A dw - |w| |h \alpha u' + ibu|^2 \langle x \rangle^{-2s} \\
    &- 2|w||u||h \alpha u' + ibu| (h^{-1}| \alpha^{-1} (b_1^2 - b^2)  + \beta^{-1} V_0| +  |\alpha^A d(\varphi')| + |(\varphi')^A d\alpha| + 4h^{-1} |\varphi' b|)\\
&- h^{-2} |w| |e^{\varphi/h} f|^2 + 2 \ep h^{-1} \beta^{-1} w \imag((\overline{h \alpha u' +i bu}) u).
\end{split}
\end{equation}

Next, let $\mu$ be the nonnegative, finite measure 
\begin{equation} \label{mu}
\begin{split}
\mu = \mu(h) &\defeq  h^{-1}| \alpha^{-1} (b_1^2 - b^2)  + \beta^{-1} V_0| +  |\alpha^A d(\varphi')| + |(\varphi')^A d\alpha| + 4h^{-1} |\varphi'|(b^2 + |b|)  \\ &+|d(\beta^{-1} \alpha(E - V_1) + \alpha^2 (\varphi')^2 + b_1^2)|.
\end{split}
\end{equation}
Note that while $b^2 + |b|$ is not necessarily a finite measure, $|\varphi'|(b^2 + |b|)$ is since $\varphi'$ has compact support; $b_1^2 - b^2 = -2b_0 b_1 + b^2_0$ is a finite measure because $b_0 \in L^1(\R) \cap L^2(\R)$.

Before constructing $w$, we make several observations about $\mu$. Let us decompose
\begin{equation*}
\mu = \mu_{c} + \mu_{d}
\end{equation*}
into its continuous and discrete parts. Since $V_0$ is a finite measure, and since $V_1$, $b_1$, $\alpha$ and $\beta$ have bounded variation, $\mu_{d}$ consists of at most countably many point masses, which are absolutely summable. Let $\{x_j\} \subseteq \R$ be an enumeration of the point masses, and put $\mu_{j} = \mu_{d}(x_j)$. Then 
\begin{equation*}
\mu = \mu_{c} + \sum_j \mu_{j} \delta_{x_j},
\end{equation*}
where $\delta_{x_j}$ denotes the Dirac measure concentrated at $x_j$.

We are now prepared to define $w$. We use the family of weights,
\begin{equation} \label{weta}
w(x) = w_\eta(x) =  -e^{q_{1, \eta}(x)}(e^{q_{2}(x)}-1) \mathbf{1}_{(-\infty, 0)}(x)  + e^{q_{1, \eta}(x)}  (e^{q_{2}(x)}-1) \mathbf{1}_{(0, \infty)}(x), 
 \end{equation}
depending on the additional parameter $\eta > 0$, where 
\begin{equation} \label{q1 BV}
q_{1, \eta}(x) =   \sgn(x) \int_{0}^x  \mu_c + \pi^{-1/2} \eta^{-1}  \sum_{x_j \neq 0 }  W_j e^{-((x'-x_j)/\eta)^2} dx'.
\end{equation}
and
\begin{equation} \label{q2 BV}
   q_{2}(x)  \defeq  \sgn(x) \kappa \int_{0}^x  \langle x' \rangle^{-2s} dx' .
\end{equation}
Here,
\begin{equation*}
\sgn(x) = \begin{cases}
1 & x \ge 0, \\
-1 & x < 0,
\end{cases}
\end{equation*}
while $\kappa \ge 1$ and summable $W_j \ge 0$ will be chosen in due course, independent of $\eta$. 

Note that $w(0) = 0$, implies the product $w \delta_0 = 0$, which we make use of in the ensuing estimates. In addition, observe that $\sup |w_\eta|$ is bounded from above independent of $\eta$ since for all $\eta > 0$
\begin{equation*}
|q_{1, \eta}(x)| \le \mu_c(\R) + \pi^{-1/2} \sum_{x_j \neq 0} W_j \int^\infty_0 e^{-(x')^2}dx' \le \mu_c(\R) + \frac{1}{2} \sum_{x_j \neq 0} W_j.
\end{equation*}
For later use, we put
\begin{equation} \label{Cw}
 C_w \defeq \sup_{x \in \R, \, \eta > 0} |w_\eta(x)|.
 \end{equation}

By \eqref{e:prod} and \eqref{chain rule continuous}, 
\begin{equation} \label{dw}
\begin{split}
    dw &= \kappa \langle x \rangle^{-2s} e^{q_{1, \eta} + q_{2}} + |w|(\mu_c + \pi^{-1/2} \eta^{-1} \sum_{x_j \neq 0}   W_j e^{-((x-x_j)/\eta)^2}).  \\
  \end{split}
\end{equation}
Substituting \eqref{dw} into the right side \eqref{dwF}, we conclude, for $\gamma_j > 0$ to be chosen,
\begin{equation} \label{lwr bd BV dwF2}
\begin{split}
d(wF) & \ge  |u|^2(\kappa \tau  \langle x \rangle^{-2s} + (\kappa \tau -1)|w| \mu_c) + |h \alpha u' + ibu|^2((\kappa -1) \langle x \rangle^{-2s} + (\kappa - 1)|w| \mu_c)   \\ 
&+ |w||u|^2 \sum_{x_j \neq 0}  ( \tau \pi^{-1/2} \eta^{-1} W_j  e^{-((x-x_j)/\eta)^2} - (1 + \gamma_j^{-1}) \mu_j \delta_{x_j}\big) \\
&+  |w| |h \alpha u' + ibu|^2\sum_{x_j \neq 0}  (\pi^{-1/2} \eta^{-1} W_j e^{-((x-x_j)/\eta)^2} - \gamma_j \mu_j \delta_{x_j}\big) \\
&- h^{-2} |w| |e^{\varphi/h} f|^2 + 2 \ep h^{-1} \beta^{-1} w \imag((\overline{h \alpha u' +i bu}) u).
\end{split}
\end{equation}
Note the $\gamma_j$ arise from using Young's inequality at the point masses of $\mu$:
\begin{equation*}
2|w||u||h \alpha u' + ibu| \mu_j \delta_{x_j} \ge ( \gamma_j^{-1} |u|^2 + \gamma_j |h \alpha u' + ibu|^2) |w| \mu_j \delta_{x_j}.
\end{equation*}
The summations appearing in lines two and three do not include the point mass that $\mu$ may have at zero, since $w$ vanishes there. 

Now, fix $\kappa = \kappa(h) \defeq \max(2, 1/\tau(h))$ so that \eqref{lwr bd BV dwF2} implies 

\begin{equation} \label{lwr bd BV dwF3}
\begin{split}
d(wF) & \ge  \langle x \rangle^{-2s} |u|^2 + \langle x \rangle^{-2s}  |h\alpha u' + ibu|^2 \\
&+   |w||u|^2 \sum_{x_j \neq 0} ( \tau \pi^{-1/2} \eta^{-1} W_j   e^{-((x-x_j)/\eta)^2} - (1 + \gamma_j^{-1})  \mu_j \delta_{x_j} ) \\
&+ |w| |h \alpha u' + ibu|^2\sum_{x_j \neq 0} ( \pi^{-1/2} \eta^{-1} W_j e^{-((x-x_j)/\eta)^2} - \gamma_j \mu_j \delta_{x_j} ) \\
 &- h^{-2} |w| |e^{\varphi/h} f|^2 + 2 \ep h^{-1} \beta^{-1} w \imag((\overline{h \alpha u' +i bu}) u).
\end{split}
\end{equation}

To proceed, we integrate \eqref{lwr bd BV dwF3}. because $F(x) \in L^1(\R)$ and is continuous off of a countable set, there are sequences $\{a^\pm_n\}_{n=1}^\infty$ tending to $\pm \infty$, along which $F(a^\pm_n) \to 0$. Thus, after integrating \eqref{lwr bd BV dwF3} over $(a^-_n, a^+_n]$ and sending $n \to \infty$, the left side of \eqref{lwr bd BV dwF3} becomes zero. Therefore

\begin{equation} \label{just after integrate}
\begin{split}
\int& \langle x \rangle^{-2s} (|u|^2 + |(h\alpha \partial_x + ib)u|^2) dx \\
&+ \sum_{x_j \neq 0}  \int |w||u|^2 ( \tau \pi^{-1/2} \eta^{-1} W_j   e^{-((x-x_j)/\eta)^2} dx- (1 + \gamma_j^{-1}) \mu_j \delta_{x_j} ) \\
&+ \sum_{x_j \neq 0}  \int |w||h \alpha u' + ibu|^2 ( \pi^{-1/2} \eta^{-1} W_j   e^{-((x-x_j)/\eta)^2} dx-  \gamma_j \mu_j \delta_{x_j} ) \\
&\le h^{-2} C_w \int |e^{\varphi/h}f|^2 dx + \tfrac{|\ep| C_w}{h \inf \beta }  \int  |h \alpha u' +i b u|^2 +  |u|^2 dx,
\end{split}
\end{equation}
where we have use \eqref{Cw}.

Our goal is to show that, upon choosing the $W_j$ and $\gamma_j$ appropriately, and in the limit $\eta \to 0^+$, the resulting quantities in lines two and three of \eqref{just after integrate} are nonnegative, yielding the simpler estimate 
\begin{equation} \label{pre penult est} 
\begin{split}
\int& \langle x \rangle^{-2s} ( |u|^2 + |h\alpha u' + ibu|^2) dx \\
&\le  h^{-2} C_w \int |e^{\varphi/h}f|^2 dx +  \tfrac{|\ep| C_w}{h \inf \beta }  \int  |h \alpha u' +i bu|^2 +  |u|^2 dx.
\end{split}
\end{equation}
This calculation is elementary but tedious. The details are given in Appendix \ref{deal with point masses appendix}.

Our final task is to appropriately estimate the term involving $|(h \alpha \partial_x +i b)u|^2$ in the second line of \eqref{pre penult est}. We start with  $u' = h^{-1} \varphi' u + e^{\varphi/h} v'$ and 
\begin{equation} \label{h alpha u prime plus b square} 
\begin{split}
\int  |h \alpha u' +i bu|^2 dx &= \int |e^{\varphi/h} h \alpha v' + (\alpha \varphi' + ib)u|^2 dx \\
& \le  2 e^{2\sup \varphi /h} \big( \sup \alpha  \int  \alpha |h v'|^2 dx +   2( \|\alpha \varphi'\|^2_{L^\infty} + \|b_1\|^2_{L^\infty}) \int |v|^2 dx \\
&+  \|b_0\|^2_{L^2} \| v \|^2_{L^\infty} \big).
\end{split}
\end{equation}
The well known bound $\|v\|^2_{L^\infty} \le \|v\|_{L^2} \|v'\|_{L^2}$ implies
\begin{equation} \label{peter paul Linfty}
 \|v\|^2_{L^\infty} \le  \tfrac{1}{2 h^2 \gamma \inf \alpha} \|v \|^2_{L^2} +  \frac{\gamma}{2} \int \alpha |hv'|^2, \qquad \gamma > 0.
\end{equation}
Thus by \eqref{h alpha u prime plus b square},
\begin{equation*}
\int  |(h \alpha \partial_x +i b)u|^2 dx \le  C(h) \int  \alpha |h v'|^2 + |v|^2 dx 
\end{equation*}
for some $C(h) > 0$ with the dependencies as described in the statement of Theorem \ref{Carleman mag schro 1D thm}. We continue to use this constant although its precise value may change from line to line. 

 Estimate $\int  \alpha |h v'|^2 dx$ using \eqref{prod to form}:
\begin{equation} \label{h alpha v prime square}
\begin{split}
\int  \alpha |h v'|^2 &=   \real \langle (P(h) - E -i\ep)v ,v \rangle_{L^2(\beta^{-1}dx)} + 2h \imag \int b \overline{v}'v + \int \beta^{-1} |v|^2 (E -V) \\
& \le  \int (\tfrac{1}{2 \inf \beta} |f|^2 + ( \tfrac{1}{2 \inf \beta} + \|E - V_1\|_{L^\infty}  + \tfrac{4 \|b_1\|^2_{L^\infty}}{\inf \alpha} ) |v|^2)dx  +   \tfrac{1}{2} \int \alpha |hv'|^2dx \\
&+ \big( \tfrac{ \|V_0\| }{\inf \beta} + \tfrac{4\|b_0\|^2_{L^2}}{\inf \alpha} \big) \|v\|^2_{L^{\infty}}.
\end{split}
\end{equation}
Using \eqref{peter paul Linfty} once more to bound the factor $\|v\|^2_{L^\infty}$ in the last line of \eqref{h alpha v prime square}, we get a bound for $\int \alpha |hv'|^2$ in terms of integrals of $|f|^2$ and $|v|^2$. Combining this with \eqref{pre penult est}, \eqref{h alpha u prime plus b square}, and $|\ep| \le \ep_0$ implies 
\begin{equation} \label{ult est}
 \int \langle x \rangle^{-2s}(|u|^2 + |(h\alpha \partial_x + ib)u|^2) dx  \le C(h) \int |f|^2 + |\ep| |v|^2 dx,
 \end{equation}
which concludes the proof of \eqref{weighted est mag schro}.\\
 \end{proof}
 
 \section{Proofs of Corollaries \ref{ext resolv est corollary} and \ref{exp resolv est corollary}}
 
 In this section, we show how Corollaries \ref{ext resolv est corollary} and \ref{exp resolv est corollary} follow from the proof of Theorem \ref{Carleman mag schro 1D thm}.
 
 \begin{proof}[Proof of \eqref{ext resolv est mag schro}]
Set $v = (P(h) - E - i\ep)^{-1} \langle x \rangle^{-s} \mathbf{1}_{> R} f$ for $f \in L^2(\R)$ arbitrary.  Start from the estimate \eqref{dwF} in the proof of Theorem \ref{Carleman mag schro 1D thm}. Only now we take $\varphi = 0$ and ask that our weight $w_\eta$ vanishes on $[-R, R]$. Thus we have the simpler lower bound 
\begin{equation} \label{lwr bd dwF ext}
\begin{split}
d&(wF) \\
 & \ge  |v|^2( \tau_1 dw  -  |w| \mu) \\
    &+ |h \alpha v' + ibv|^2 dw - |w| |h \alpha v' + ibv|^2 \langle x \rangle^{-2s}  \\
&- h^{-2} w |f|^2 + 2 \ep h^{-1} \beta^{-1} w \imag((\overline{h \alpha v' +i bv}) v),
\end{split}
\end{equation}
where this time the measure $\mu$ is only
\begin{equation*}
\mu =  |d(\beta^{-1}\alpha(E - V_1) + b^2_1)|.
\end{equation*}
Note that under the hypotheses of Corollary \ref{ext resolv est corollary}, $\mu(\R)$ is bounded independently of $h$. Also, there is no need to use the average value of $h \alpha v' + ibv$ in \eqref{lwr bd dwF ext}. Indeed, $h \alpha v' + ibv$ is locally absolutely continuous on the support of $w$, since $V_0 = 0$ there.

Let $\{x_j\}$ be an enumeration of the point masses of $\mu$ in $(-\infty, -R) \cup (R, \infty)$. We take $w_\eta$ similar to \eqref{weta}, though adjusted so it is zero on $[-R, R]$:

 \begin{equation} \label{weta ext}
 w_\eta(x) = \begin{cases} \sgn(x) e^{q_{1, \eta}(x)} (e^{q_2(x)} -1)  & |x| > R, \\
0 & |x| \le R. \end{cases} 
\end{equation}
where 
\begin{equation} \label{q1 BV ext}
q_{1, \eta}(x) =   \sgn(x) \int_{\sgn(x)R}^x  \mu_c + \pi^{-1/2} \eta^{-1}  \sum_{\sgn(x) x_j > R }  W_j e^{-((x'-x_j)/\eta)^2} dx'.
\end{equation}
and
\begin{equation} \label{q2 BV}
   q_{2}(x)  \defeq  \sgn(x) \kappa \int_{\sgn(x)R}^x  \langle x' \rangle^{-2s} dx' ,
\end{equation}
for suitable $\kappa > 0$ and $W_j$ to be chosen. This yields
\begin{equation} \label{dweta ext}
\begin{split}
dw & = \kappa \langle x \rangle^{-2s} e^{q_{1, \eta} + q_2} (\mathbf{1}^{A}_{(-\infty, -R)} + \mathbf{1}^{A}_{(R, \infty)}) \\
&+ |w| \mathbf{1}_{(-\infty, -R)} (\mu_c + \pi^{-1/2} \eta^{-1} \sum_{x_j < -R} W_j e^{-((x-x_j)/\eta)^2}) \\
&+  |w| \mathbf{1}_{(R, \infty)} (\mu_c + \pi^{-1/2} \eta^{-1} \sum_{x_j > R} W_j e^{-((x-x_j)/\eta)^2})
\end{split}
\end{equation}

Now plug \eqref{weta ext} and \eqref{dweta ext} into \eqref{lwr bd dwF ext} and follow the steps in the proof of Theorem \ref{Carleman mag schro 1D thm}, beginning from \eqref{lwr bd BV dwF3}. That is, fix $\kappa > 0$ large enough (independent of $h$) to control the portions of lines two and three of \eqref{lwr bd dwF ext} which are absolutely continuous. Handling the point masses $\mu_j \delta_{x_j}$ is simpler in this situation, since $\sum_j \mu_j$ is bounded independent of $h$, and since $\mu$ appears in line two of \eqref{lwr bd dwF ext} only. We again refer the reader to the steps of Appendix \ref{deal with point masses appendix}. Then, by sending $\eta \to 0^+$, we get
\begin{equation} \label{pre penult 1D BV ext est} 
\begin{split}
\int_{\R \setminus [-R, R]}  &\langle x \rangle^{-2s}(|v|^2 + |h\alpha v' + ibv|^2)dx \\
 &\le  C_0 h^{-2}  \int_{\R \setminus [-R, R]}  |f|^2 dx \\
 &+C_0 |\ep| h^{-1} \big(  \int_{-\infty}^{-R}  |v|^2 +|(e^{q_{2}} - 1)(h \alpha v' + ibv)|^2 dx \\
  &+ \int_{R}^{\infty} |v|^2 +|(e^{q_{2}} - 1)(h \alpha v' + ibv)|^2 dx \big),
 \end{split}
\end{equation}
for some $C_0 > 0$ independent of $h$, whose value may change from line to line.

As before, we need to estimate the terms involving $\ep$ on the right side of \eqref{pre penult 1D BV ext est}. For the terms in which $|h \alpha v' + ibu|^2$ appears, we can prepare to integrate by parts, since as noted above $h \alpha v' + ibv$ is locally absolutely continuous on our present domains of integration. For instance,
\begin{equation} \label{prepare to integrate by parts}
\begin{split}
 \int_{R}^\infty (e^{q_{2, R}} - 1)^2 |h \alpha v' + ibv|^2dx &\le \sup \alpha  \int_{R}^\infty (e^{q_{2, R}} - 1)^2 \alpha^{-1} |h \alpha v' + ibv|^2dx \\
 &= \sup \alpha \real \big( \int_{R}^\infty (e^{q_{2}} - 1)^2 h\overline{v}' (h\alpha v' + ibv) dx \\
 &-i  \int_{R}^\infty (e^{q_{2, R}} - 1)^2 \alpha^{-1} b  \overline{v} (h\alpha v' + ibv) dx \big) \\
 & \le \sup \alpha \real \int_{R}^\infty (e^{q_{2}} - 1)^2 h\overline{v}' (h\alpha v' + ibv) dx \\
 &+ C_0 \int_{R}^\infty |v|^2 dx + \tfrac{1}{4} \int_{R}^\infty (e^{q_{2}} - 1)^2 |h \alpha v' + ibv|^2dx.
 \end{split}
\end{equation}
Note we estimated the term  $\imag \int_{R}^\infty (e^{q_{2, R}} - 1)^2 \alpha^{-1} b  \overline{v} (h\alpha v' + ibu) dx$ using Young's inequality, along with the fact that $|e^{q_{2}} -1| \le C_0$. We shall estimate in this manner several more terms that arise.
Focusing now on line four of \eqref{prepare to integrate by parts}, and integrating by parts:
\begin{equation} \label{integrate by parts ext est}
\begin{split}
\real  \int_{R}^\infty& (e^{q_{2}} - 1)^2 h\overline{v}' (h\alpha v' + ibv) dx \\ 
 &= \real \int_{R}^\infty (e^{q_{2}} - 1)^2 \overline{v} (-h^2 \alpha v' - ihbv)' - 2 \kappa \langle x \rangle^{-2s} e^{q_{2}} (e^{q_{2}} - 1) h \overline{v}(h \alpha v' + ibv)  dx \\
 &= \real \int_{R}^\infty (e^{q_{2}} - 1)^2 \beta^{-1} \overline{v} (P(h) - E - i\ep)v dx \\
 &- h \imag \int_{R}^\infty (e^{q_{2}} - 1)^2 b \overline{v} v'   dx +  \int_{R}^\infty (e^{q_{2}} - 1)^2 \beta^{-1}  (E - V_1)|v|^2 dx \\
 & - 2 \kappa \int_{R}^\infty \langle x \rangle^{-2s} e^{q_{2}} (e^{q_{2}} - 1) h \overline{v}(h \alpha v' + ibv)  dx\\
 &= \real \int_{R}^\infty (e^{q_{2}} - 1)^2 \beta^{-1} \overline{v} (P(h) - E - i\ep)v dx \\
 &- \imag \int_{R}^\infty (e^{q_{2}} - 1)^2 \alpha^{-1} b \overline{v} (h\alpha v' + ibv)   dx \\
 &+ \int_{R}^\infty \alpha^{-1} (e^{q_{2}} - 1)^2 b^2 |v|^2 dx +  \int_{R}^\infty (e^{q_{2}} - 1)^2 \beta^{-1}  (E - V_1)|v|^2 dx \\
 &- 2 \kappa  \int_{R}^\infty \langle x \rangle^{-2s} e^{q_{2}} (e^{q_{2}} - 1) h \overline{v}(h \alpha v' + ibv)  dx \\
 &\le C_0 \int_{R}^\infty |f|^2 + |v|^2 dx + \tfrac{1}{4} \int_{R}^\infty (e^{q_{2}} - 1)^2 |h \alpha v' + ibv|^2dx.
\end{split}
\end{equation}
From \eqref{prepare to integrate by parts} and \eqref{integrate by parts ext est} we deduce,
\begin{equation*}
\int_{R}^\infty (e^{q_{2, R}} - 1)^2  |h \alpha v' + ibv|^2 \le C_0 \int_R^\infty |f|^2 +  |v|^2 dx.
\end{equation*}
Substituting this into the right side of \eqref{pre penult 1D BV ext est} implies 
\begin{equation} \label{penult 1D BV ext est}
\begin{split}
\int_{\R \setminus [-R, R]}  &\langle x \rangle^{-2s}(|v|^2 + |h\alpha v' + ibv|^2)dx \\
 &\le C_0h^{-2}  \int_{\R \setminus [-R, R]}  |f|dx +C_0|\ep| h^{-1} \int_{\R \setminus [-R, R]} |v|^2 dx \\
 &+ C_0 |\ep| h^{-1} \int_{-\infty}^{-R}|(e^{q_{2}} - 1)(h \alpha v' + ibv)|^2 dx. 
 \end{split}
\end{equation}
Line three of \eqref{penult 1D BV ext est} may be estimated in a similar way, so that 
 \begin{equation}  \label{ult 1D BV ext est}
\begin{split}
\int_{\R \setminus [-R, R]}  &\langle x \rangle^{-2s}(|v|^2 + |h\alpha v' + ibv|^2)dx \\
 &\le  C_0h^{-2}  \int_{\R \setminus [-R, R]}  |f|dx +C_0|\ep| h^{-1} \int_{\R \setminus [-R,R]} |v|^2 \beta^{-1} dx.
 \end{split}
\end{equation}

Then, because $(P(h) - E - i\ep)v =  \langle x \rangle^{-s}  \mathbf{1}_{> R} f$,
\begin{equation*}
\begin{split}
C_0|\ep| h^{-1} \int_{\R \setminus [-R,R]} |v|^2 dx &\le C_0 |\ep| h^{-1} \sup \beta \int_{\R \setminus [-R,R]} |v|^2 \beta^{-1} dx \\
&= C_0 \sgn(\ep) h^{-1} \imag \langle (P(h) - E - i\ep) v, v  \rangle_{L^2 (\beta^{-1} dx)} \\
&\le C_0h^{-2}  \int_{\R \setminus [-R,R]} |f|^2 dx + \tfrac{1}{2} \int_{\R \setminus [-R,R]} \langle x \rangle^{-2s} |v|^2 dx.
\end{split}
\end{equation*}
The term $\tfrac{1}{2} \int_{\R \setminus [-R,R]} \langle x \rangle^{-2s} |v|^2 dx$ may be absorbed into the left side of \eqref{ult 1D BV ext est}. Therefore,
 \begin{equation*}  
\begin{split}
\int_{\R \setminus [-R, R]}  &\langle x \rangle^{-2s}(|v|^2 + |h\alpha v' + ibv|^2)dx \le  \frac{C_0}{h^2}  \int_{\R \setminus [-R, R]}  |f|^2dx,
 \end{split}
\end{equation*}
finishing the proof of \eqref{ext resolv est mag schro}. \\
 \end{proof} 
 
 \begin{proof}[Proof of \eqref{resolv est  mag schro}]
Under the hypotheses  of Corollary \ref{exp resolv est corollary}, in the proof of Theorem \ref{Carleman mag schro 1D thm}, we may take $\varphi$ (as in \eqref{varphi}) and thus $\tau$ (as in \eqref{tau}) independent of $h$.  Moreover, by the supposed uniformity of the coefficients with respect $h$, the measure $\mu$ as in \eqref{mu} obeys $\mu(\R) \le C_1h^{-1}$ for some $C_1 > 0$ independent of $h$. We reuse this constant below though its precise value may change.  

In particular, we attain \eqref{fix Wj} in Appendix \ref{deal with point masses appendix} by setting $W_j = M \mu_j$, with $M = \max(2, 8 \tau^{-1})$ (which is independent of $h$), and $\mu_j \delta_{x_j}$ the nonzero point masses of $\mu$. Thus, under these assumptions, $C_w$ as in \eqref{Cw}, and the other constants in front of integrals on the right sides of \eqref{pre penult est} through \eqref{h alpha v prime square}, are bounded from above by a single constant of the form $e^{C_1/h}$, giving
\begin{equation} \label{ult est exp}
 \int \langle x \rangle^{-2s}|e^{\varphi/h} v|^2 dx  \le e^{C_1/h} \int |f|^2 + |\ep| |v|^2 dx.
\end{equation}
Recalling $v = (P(h) - E(h) - i\ep)^{-1} \langle x \rangle^{-s} f$, the $|u|^2$-term on the right side has the bound
\begin{equation} \label{ep int u square exp}
\begin{split}
 \ep e^{C_1/h} \int |v|^2dx &= |\ep| e^{C_1/h} \sup \beta \int |v|^2 \beta^{-1} dx  \\
 &\le e^{C_1/h} \sgn(\ep) \imag \langle (P(h) - E - i\ep)v,v  \rangle_{L^2(\beta^{-1}dx)}  \\
&\le e^{C_1/h} \int |f|^2 dx + \tfrac{1}{2} \int \langle x \rangle^{-2s} |v|^2 dx. \end{split}
\end{equation} 
We may absorb the second term in line three of \eqref{ep int u square exp} into the right side of \eqref{ult est exp}, implying \eqref{resolv est mag schro}.\\
 \end{proof} 
  
 \section{Applications} \label{applications section}
 
 \subsection{Uniform resolvent estimates and resonance free strip} \label{uniform resolv est subssection}
 
We prove Theorem \ref{unif resolv est thm} as an application of Theorem \ref{Carleman mag schro 1D thm}. As described in Section \ref{intro section}, we are concerned with
\begin{equation} \label{H}
H \defeq \beta(x) (- \partial_x (\alpha(x) \partial_x ) + b(x) D_x + D_xb(x)) + V(x),
\end{equation}
We suppose the coefficients satisfy \eqref{decompose electric potential} through \eqref{inf alpha beta}, although now they are independent of the semiclassical parameter, and $V_0$, $V_1$, $b_0$, $b_1$, $1 - \alpha$, and $1 - \beta$ have support in $[-R_0, R_0]$.

In this situation, $H$ is a \textit{black box Hamiltonian} in the sense of Sj\"ostrand and Zworski \cite{sz91}, as defined in \cite[Definition 4.1]{dz}. More precisely, in our setting this means the following. First, if  $u \in \mathcal{D}$, then $u|_{\mathbb R\setminus [-R_0,R_0]} \in H^2(\mathbb R\setminus [-R_0,R_0])$.  Second, for any $u \in \mathcal{D}$, we have $(Hu)|_{\mathbb R\setminus [-R_0,R_0]}  = - (u|_{\mathbb R\setminus [-R_0,R_0]})''$. Third, any $u \in H^2(\mathbb R)$ which vanishes on a neighborhood of $[-R_0,R_0]$ is also in $\mathcal{D}$. Fourth, $\textbf{1}_{[-R_0,R_0]} (H+i)^{-1}$ is compact on $\mathcal H$; this last condition follows from the fact that $\mathcal{D} \subseteq H^1(\R)$.

Then, by the analytic Fredholm theorem (see \cite[Theorem 4.4]{dz}), we have the following. In $\imag \lambda > 0$, the resolvent $(H - \lambda^2)^{-1}$ is meromorphic $L^2(\R) \to \mathcal{D}$; $\lambda$ is a pole of $(H -\lambda^2)^{-1}$, if and only if $\lambda^2 < 0$ is an eigenvalue of $H$. Furthermore, for $\chi \in C_0^\infty(\mathbb R; [0,1])$ with $\chi =1 $ near $[-R_0, R_0]$, the cutoff resolvent $\chi (H - \lambda^2)^{-1} \chi$ continues meromorphically $L^2(\R) \to \mathcal{D}$ from $\imag \lambda \gg 1$ to $\C$. The poles of the continuation are known as its \textit{resonances}. 

\begin{proof}[Proof of Theorem \ref{unif resolv est thm}]
Throughout, we use $C$ to denote a positive constant, which may depend on the operator coefficients in \eqref{H} and on $\lambda_0$, and whose precise value may change from line to line, but is always independent of $\lambda$. The quantity $\theta_0$ in the statement of the Theorem \ref{unif resolv est thm}, which restricts $|\imag \lambda|$, will be fixed sufficiently small (depending on $\lambda_0$) at the appropriate step.

We first establish \eqref{unif resolv est} for $k = 0$, $\imag \lambda > 0$, and $|\real \lambda | \ge \lambda_0$.  In this case, let us expand
\begin{equation} \label{expand lambda square}
H - \lambda^2 =  H  - (\real \lambda)^{2} + (\imag \lambda)^2 - i 2\real \lambda \cdot \imag \lambda.  
\end{equation}
If $\imag \lambda \ge \theta_0$, then by the spectral theorem for self-adjoint operators,
\begin{equation} \label{use spect thm}
\|\chi (H - \lambda^2)^{-1} \chi\|_{L^2 \to L^2} \le  \frac{1}{2 |\real \lambda| \imag \lambda }  \le \frac{1}{2 \theta_0 |\real \lambda|}  \le C |\real \lambda|^{-1}.
\end{equation}

If $0 < \imag \lambda < \theta_0$, we rescale \eqref{expand lambda square} semiclassically:

\begin{gather}
h = |\real \lambda|^{-1}, \quad  h_0 = \lambda_0^{-1},  \quad \ep_0 = 2h_0 \theta_0, \quad E(h) = 1 - h^2(\imag \lambda)^2, \quad \ep(h) = 2h \sgn(\real \lambda)  \imag \lambda, \label{semiclassical rescale} \\
 \tilde{V} = h^2 V, \quad \tilde{b} = hb. \label{rescale V and b}
\end{gather}
  Then
\begin{equation*} 
\begin{split}
&(H-\lambda^2)v \\
 &= \beta d(-\alpha v' - ib v) + \beta b D_x v + vV - ((\real \lambda)^2 + 2i \real \lambda \imag \lambda - (\imag \lambda)^2)v \\
& = h^{-2} (\beta d(- h^2 \alpha v' -  h^2 ib v) +  h^2 \beta b D_x v +   vh^2 V -  (1 - h^2 (\imag \lambda)^2)v - 2i h  \imag \lambda v) \\
& = h^{-2} (\tilde{H} - E(h) - i \ep(h))v
\end{split}
\end{equation*}
where
\begin{equation} \label{tilde H}
\tilde{H}v = \beta d(-h^2 \alpha v' -i h \tilde{b} v) + h \beta \tilde{b} D_x v + v \tilde{V}.
\end{equation}

We check how, in the context of \eqref{semiclassical rescale}, \eqref{rescale V and b}, and \eqref{tilde H}, the hypotheses of Theorem \ref{Carleman mag schro 1D thm} are satisfied. Let us now fix $\theta_0 = 2^{-1/2}h^{-1}_0$, so that, by \eqref{semiclassical rescale}, $1/2 \le E(h) \le 1$. Since $V_1$, $b_1$, $1 - \alpha$, and $1 - \beta$ are supported in $[-R_0, R_0]$, with respect to the quantity \eqref{general inf},
\begin{equation} \label{full range of h}
\inf_{|x| \ge 2R_0} (\beta^{-1} \alpha (E(h)- h^2V_1) + h^2b^2_1) \ge \frac{1}{2}, \qquad h \in (0, h_0].
\end{equation}
Furthermore, since $|h^2V_1| \le h^2 \| V_1\|_{L^\infty} \to 0$ as $h \to 0^+$, there exists $0 < \tilde{h}_0 < h_0$ (depending on $V_1$, $\alpha$, and $\beta$) so that 
\begin{equation} \label{smaller range of h}
\inf_{\R} (\beta^{-1} \alpha (E(h)- h^2V_1) + h^2b^2_1) \ge \inf \frac{\alpha}{4\beta}, \qquad h \in (0, \tilde{h}_0].
\end{equation}

Thus the assumptions of Theorem \ref{Carleman mag schro 1D thm} hold, and we have a weighted estimate as in \eqref{pre penult est}. It is now important to  consider the properties of $\varphi(\cdot, h)$ and $C_w$ in \eqref{pre penult est}. Since $E(h)$, $|h^2V_1|$, and $h^2b^2_1$ are uniformly bounded for $h \in (0,h_0]$, we may take $\varphi$ independent of $h$ for $h \in  (0, h_0]$. Moreover, since since the infima in \eqref{smaller range of h} are over $\R$, we can set $\varphi \equiv 0$ for $h \in (0, \tilde{h}_0]$. As for $C_w$, we look to \eqref{Cw}, \eqref{weta}, \eqref{mu}, and \eqref{fix Wj}. From these it follows that $C_w$ is uniformly bounded for $h \in (0, h_0]$, because by \eqref{rescale V and b} the measure $\mu$ in \eqref{mu} obeys
\begin{equation*}
\begin{split}
\|\mu\|(\R) &\le C \big( \| \alpha^{-1} (b_1^2 - b^2)  + \beta^{-1} V_0\| + \| \alpha^A d(\varphi')| \| + \| (\varphi')^A d\alpha \|  + \||\varphi'|(b^2 + |b|)\|  \\ 
&+\| d(\beta^{-1} \alpha(E(h) - V_1) + \alpha^2 (\varphi')^2 + b_1^2)\| \big)(\R),
\end{split}
\end{equation*}
the right side of which is uniformly bounded for $h \in (0,h_0]$. We conclude that \eqref{pre penult est} simplifies to
\begin{equation*}
\begin{split}
\int& \langle x \rangle^{-2s}  (|u|^2 +  |h \alpha u' + i\tilde{b}u|^2)  dx \\
&\le C h^{-2} \int |f|^2 dx +  C  h^{-1} |\ep(h)| \int  |h \alpha u' + i\tilde{b}u|^2 +  |u|^2 dx, \qquad h \in (0,h_0], \, \imag \lambda \in (0, \theta_0]
\end{split}
\end{equation*}
where $v= (\tilde{H} - E(h) - i \ep(h))^{-1} \langle x \rangle^{-s} f$ for $f \in L^2(\R)$, and $u = e^{\varphi/h}v$.

Follow again the steps from \eqref{h alpha u prime plus b square} to \eqref{ult est}; each constant that appears and involves $E(h)$, $\tilde{V}$, $\tilde{b}$, $\alpha$, $\beta$, or $\varphi$ is bounded above uniformly in $h \in (0, h_0]$. Therefore the version of \eqref{ult est} we arrive at is 
\begin{equation*}
\int \langle x \rangle^{-2s} ( |u|^2 + |h \alpha u' + i\tilde{b}u|^2) dx \le  Ch^{-2}  \int |f|^2 dx + C  h^{-1} |\ep(h)| \int  |v|^2 dx.
\end{equation*}

The term $|\ep| \int |v|^2 dx$ can be bounded with an argument similar to  \eqref{ep int u square exp} to get
\begin{equation*} 
\begin{split}
\int \langle x \rangle^{-2s}  |v|^2 dx \le   C h^{-2} \int |f|^2 dx.
\end{split}
\end{equation*}
which implies,
\begin{equation} \label{H tilde est}
\| \langle x \rangle^{-s}  (\tilde{H} - E(h) - i \varepsilon(h))^{-1}  \langle x \rangle^{-s}\|   \le Ch^{-1}, \qquad h \in (0,h_0], \, \imag \lambda \in (0, \theta_0].
\end{equation} 
 Replacing $\langle x \rangle^{-s}$ by $\chi$ in the left side of \eqref{H tilde est} does not affect the right side. Using this with $h = |\real \lambda|^{-1}$ and $ (\real \lambda)^{2} (H - \lambda^2)^{-1} = (\tilde{H} - E(h) - i \varepsilon(h))^{-1}$ yields
 
 \begin{equation} \label{use semiclassical rescale} 
\|\chi (H - \lambda^2)^{-1} \chi\|_{L^2 \to L^2} \le C |\real \lambda|^{-1}, \qquad |\real \lambda | \ge \lambda_0, \, 0 <  \imag \lambda \le \theta_0.
\end{equation}

Next, we adapt the proof of \cite[Proposition 2.5]{bgt04} to show
\begin{equation} \label{L2 H1 bd uhp}
\|\chi (H - \lambda^2)^{-1} \chi\|_{L^2 \to H^1} \le C, \qquad |\real \lambda | \ge \lambda_0, \, 0 < \imag \lambda \le 1.
\end{equation}
We employ the notation, 
\begin{equation} \label{apply op}
(H - \lambda^2) v = \chi f, \qquad |\real \lambda | \ge \lambda_0, \, 0 < \imag \lambda \le 1, \, f \in L^2(\R), \, v \in \mathcal{D}, 
\end{equation}
and make use of additional cutoffs
\begin{equation} \label{cutoffs}
 \chi_1,\, \chi_2 \in C^\infty_0(\R ; [0,1]), \quad \chi_1 = 1 \text{ on } \supp \chi, \quad \chi_2 = 1 \text{ on } \supp \chi_1. 
\end{equation}
Observe
\begin{equation*}
\| \chi(H - \lambda^2)^{-1} \chi f \|_{H^1} \le \| \chi v \|_{L^2} + \|(\chi v)'\|_{L^2} \le C( \| \chi_2 v \|_{L^2} + \|\chi_1 v'\|_{L^2}),
\end{equation*}
Since we already have \eqref{use spect thm} and \eqref{use semiclassical rescale}, it suffices to show
\begin{equation} \label{chi1 u prime}
\|\chi_1 v'\|^2_{L^2} \le C \big( (|\real \lambda| +1)^2 \|\chi_2 v\|_{L^2}^2 + \|\chi_2 f\|_{L^2}^2 \big).
\end{equation}

Multiplying \eqref{apply op} by $\chi^2_1 \overline{v}$ and applying \eqref{prod to form} gives
\begin{equation*}
\begin{split}
\int & \chi_1^2 \chi f \overline{v} \beta^{-1} dx \\
&= \langle \chi^2_1 v, Hv \rangle_{L^2(\beta^{-1}dx)} - \lambda^2 \int \chi^2_1 |v|^2 \beta^{-1} dx \\
&= \int \alpha (\chi_1^2 \overline{v})' v dx + i \int b( (\chi_1^2 \overline{v})' v -  \chi_1^2 \overline{v} v')dx  + \int \beta^{-1} \chi^2_1 |v|^2 V - \lambda^2 \int \chi^2_1 |v|^2 \beta^{-1} dx     \\
&= \int \alpha \chi_1^2 |v'|^2dx + 2 \int \alpha \chi_1' \overline{v} \chi_1 v' +i  b \chi_1 \chi'_1 |v|^2dx - \lambda^2 \int \chi^2_1 |v|^2 \beta^{-1} dx\\
&+ i\int b\chi^2_1(\overline{v}'v - \overline{v}v')dx + \int \beta^{-1} \chi^2_1 |v|^2 V 
\end{split}
\end{equation*}
Taking the real part of both sides, and estimating the last line in a manner similar to \eqref{bdd below}, we find
\begin{equation*} 
\begin{split}
\int \chi_1^2 |v'|^2dx \le C\big( \|\chi_2 f\|^2_{L^2}  + (|\real \lambda| + 1)^2 \|\chi_2 v\|^2_{L^2} \big) + \frac{1}{2} \int \chi_1^2 |v'|^2dx. 
\end{split}
\end{equation*}
Absorbing the last term on the right side into the left side confirms \eqref{chi1 u prime}.

By \eqref{use spect thm}, \eqref{use semiclassical rescale}, and  \eqref{L2 H1 bd uhp}, for $|\real \lambda | \ge \lambda_0, \, 0 < \imag \lambda \le 1$, and $f \in L^2(\R)$,
\begin{equation*}
\begin{split}
\| \chi (H- \lambda^2)^{-1} \chi f \|_{\mathcal{D}} & \le   \| \chi (H- \lambda^2)^{-1} \chi f \|_{L^2} + \| H\chi (H- \lambda^2)^{-1} \chi f \|_{L^2} \\
&\le  \| \chi (H- \lambda^2)^{-1} \chi f \|_{L^2} + \| [-\partial^2_x, \chi] \chi_1 (H- \lambda^2)^{-1} \chi f \|_{L^2} \\
& + \| \chi ((H - \lambda^2) + \lambda^2) (H- \lambda^2)^{-1} \chi f \|_{L^2} \\
&\le  \| \chi (H- \lambda^2)^{-1} \chi f \|_{L^2} +  \| f \|_{L^2}\\
&+ \| [-\partial^2_x, \chi] \chi_1 (H- \lambda^2)^{-1} \chi f \|_{L^2} + (|\real \lambda| + 1)^2 \| \chi (H- \lambda^2)^{-1} \chi f \|_{L^2} \\
&\le C (|\real \lambda| + 1) \| f \|_{L^2}.
\end{split}
\end{equation*}
This implies \eqref{unif resolv est D} for  $|\real \lambda | \ge \lambda_0, \, 0 < \imag \lambda \le 1$, and that the continued resolvent $L^2(\R) \to \mathcal{D}$ has no poles in $\R \setminus \{0\}$ (since $\lambda_0 > 0$ is arbitrary).

The last operator norm bound we prove is 
\begin{equation} \label{H1 L2 bf uhp}
\| \chi (H - \lambda^2)^{-1} \chi \|_{H^1 \to L^2} \le C |\real \lambda|^{-1}, \qquad | \real \lambda | \ge \lambda_0, \, 0 < \imag \lambda \le 1. 
\end{equation}
For this, we employ the same notation as in \eqref{apply op} and \eqref{cutoffs}, except now we suppose $f \in H^1(\R)$. 

From the proof of Lemma \ref{self-adjointness 1D lemma}, the form domain of $(H ,\mathcal{D})$ is $H^1(\R)$, so there exists a sequence $f_k \in \mathcal{D}$ converging to $f$ in $H^1(\R)$, and corresponding functions $v_k \defeq (H - \lambda^2)^{-1} \chi f_k$ converging to $v$ in $(\mathcal{D}, \| \cdot \|_{\mathcal{D}})$. Since $Hv_k = (H -\lambda^2)^{-1} \chi_1 H \chi f_k$,
\begin{equation} \label{chi H u}
\begin{split}
\| \chi H v \|_{L^2} = \lim_{k \to \infty} \| \chi H v_k \|_{L^2} \le \lim_{k \to \infty} \|  \chi_1  (H -\lambda^2)^{-1} \chi_1 H \chi f_k\|_{L^2}.
\end{split}
\end{equation}
Furthermore, by \eqref{prod to form}, for any $g \in L^2(\R)$, 
\begin{equation*}
\begin{split} 
|\langle \chi_1  (H -\lambda^2)^{-1} \chi_1 H \chi f_k, g \rangle_{L^2}| &= |\langle \chi_1  (H -\lambda^2)^{-1} \chi_1 H \chi f_k, \beta g \rangle_{L^2(\beta^{-1}dx)}| \\
&= |\langle  H \chi f_k, \chi_1  (H -(-\overline{\lambda})^2)^{-1} \chi_1 \beta g \rangle_{L^2}| \\
&\le C \| \chi f_k \|_{H^1} \| \chi_1  (H -(-\overline{\lambda})^2)^{-1} \chi_1 \beta g\|_{H^1}. 
\end{split} 
\end{equation*}
Because $\| \chi_1  (H -(-\overline{\lambda})^2)^{-1} \chi_1 \beta v\|_{H^1} \le C \| v\|_{L^2}$ by \eqref{L2 H1 bd uhp},
\begin{equation*}
 \|\chi_1  (H -\lambda^2)^{-1} \chi_1 H \chi f_k\|_{L^2} \le C \| \chi f_k \|_{H^1}. 
\end{equation*}
Returning to \eqref{chi H u}, we now find 
\begin{equation*} 
\| \chi H u \|_{L^2} \le C \lim_{k \to \infty}  \| \chi f_k \|_{H^1} \le C \| f \|_{H^1}, \qquad |\real \lambda| \ge \lambda_0, \, 0 < \imag \lambda \le 1.
\end{equation*}

Thus we have established \eqref{unif resolv est} and \eqref{unif resolv est D} in the upper half plane. To show these estimates continue to hold in a strip in the lower half plane, we appeal to a standard resolvent identity argument due to Vodev \cite[Theorem 1.5]{vo14}. In fact, the corresponding steps from \cite[Section 6]{lash24} can be followed with no changes.\\
\end{proof}

\subsection{Consequences for the Schr\"odinger and wave propagators}
With Theorem \ref{unif resolv est thm} in hand, we prove Corollary \ref{schro wave corollary}. The strategies we employ to conclude \eqref{time integrability schro} and \eqref{LED cosine wave}, and \eqref{LED sine wave} are well-known, see \cite[Sections 2.3 and 7.1]{dz}, and are based on Stone's formula.  

For the meromorphic continuation of the operator $H$, we utilize the notation
\begin{equation*}
R(\lambda) \defeq \chi(H - \lambda^2)^{-1} \chi. 
\end{equation*}

\begin{proof}[Proof of \eqref{time integrability schro}]
Fix $T > 0$ and $\varphi \in C^\infty_0(0, \infty)$. Define the operator $A : L^2(\R) \to L^2((-T, T)_t \times \R_x)$ by $v \mapsto  \chi  \varphi(H) e^{-itH}v$. We show there exists $C_1$ independent of $T$ and $\varphi$ so that $\| AA^* f \|^2 \le C^2_1 \|f\|^2_{L^2(\R_t \times \R_x)}$ for all $f \in C^\infty_0((-T,T)_t \times \R_x)$. Then by $\| A \|^2 = \|AA^*\|$ and the density of $C^\infty_0((-T,T)_t \times \R_x)$ in $L^2((-T, T)_t \times \R_x)$,
\begin{equation*}
\int_{\R} \mathbf{1}_{[-T, T]} (t) \| \chi e^{-iHt} \varphi(H) v \|_{L^2}^2 dt  = \int_{-T}^{T} \| \chi e^{-iHt} \varphi(H) v \|_{L^2}^2 dt \le C_1 \| v\|^2_{L^2} , \qquad v \in L^2(\R). 
\end{equation*}
Because $C_1$ is independent of $T$ and $\varphi$, we conclude \eqref{time integrability schro} by applying the monotone convergence theorem twice: first to a sequence of $\varphi$'s increasing up to the indicator function of $[0, \infty)$, and then to sequence of $T$'s tending to infinity.
 
A straightforward calculation demonstrates
\begin{equation*}
A^*f = \int_{\R} e^{isH} \varphi(H) \chi f(s, \cdot) ds, \qquad f \in C^\infty_0((-T,T)_t \times \R_x),
\end{equation*}
and thus
\begin{equation*}
AA^*f = \int_{\R} \chi e^{-i(t -s) H}\varphi^2(H) \chi f(s, \cdot) ds.
\end{equation*}
Now use Stone's formula \cite[Section 4.1]{te} to expand $\chi e^{-i(t -s) H}\varphi^2(H)\chi f(s, \cdot) $ in the sense of strong convergence in $L^2(\R)$: 
\begin{equation} \label{Stone's formula schro propagator}
\begin{split}
\chi e^{-i(t -s) H}\varphi^2(H)\chi &= \lim_{\ep \to 0^+} \frac{1}{2 \pi i} \int_0^\infty e^{-i(t-s) \tau} \varphi^2(\tau) \chi \big[ (H - (\tau + i\ep))^{-1} - (H - (\tau - i \ep)^{-1}) \big] \chi  d\tau \\
&=  \frac{1}{2 \pi i} \int_0^\infty e^{-i(t-s) \tau} \varphi^2(\tau) [ R(\sqrt{\tau}) - R(-\sqrt{\tau}) \big] d\tau.
\end{split}
\end{equation}
We are able to set $\ep = 0$ due to $R(\lambda)$ having the meromorphic continuation supplied by Theorem \ref{unif resolv est thm}. We now have, by Fubini's theorem,
\begin{equation*}
\begin{split}
AA^*f &= \frac{1}{2 \pi i} \int_{\R} \int_0^\infty e^{-i(t-s) \tau} \varphi^2(\tau) [ R(\sqrt{\tau}) - R(-\sqrt{\tau}) \big]  f(s, \cdot) d\tau ds\\
 &=  \frac{1}{2 \pi i} \int^\infty_0 e^{-it \tau} \big[ R(\sqrt{\tau}) - R(-\sqrt{\tau}) \big] \varphi^2(\tau) \int_{\R} e^{is\tau} f(s, \cdot) ds d\tau \\
 &= -i \mathcal{F}_{\tau \mapsto t} \big[(R(\sqrt{\tau}) - R(-\sqrt{\tau})) \varphi^2(\tau) \mathcal{F}^{-1}_{s \mapsto \tau} (f(s, \cdot))  \big],  
\end{split}
\end{equation*}
where $\mathcal{F}$ and $\mathcal{F}^{-1}$ denote Fourier transform and inverse Fourier transform respectively. Applying Plancherel's theorem twice:
\begin{equation*}
\begin{split}
\|AA^*f\|^2 &=  4\pi^2  \| (R(\sqrt{\tau}) - R(-\sqrt{\tau})) \varphi^2(\tau) \mathcal{F}^{-1}_{s \mapsto \tau} (f(s, \cdot)) \|^2_{L^2(\R \times \R)} \\
& \le 4 \pi^2  \sup_{\tau \ge 0}(\| (R(\sqrt{\tau}) - R(-\sqrt{\tau})) \varphi^2(\tau) \|^2_{L^2 \to L^2})\|f \|^2_{L^2(\R \times \R)}. 
 \end{split}
\end{equation*}
Clearly $\sup_{\tau \ge 0} \| (R(\sqrt{\tau}) - R(-\sqrt{\tau})) \varphi^2(\tau) \|_{L^2 \to L^2}$ is independent of the support of $f$. In addition, \eqref{unif resolv est} and the hypothesis that $H$ has no zero resonance imply $\| (R(\sqrt{\tau}) - R(-\sqrt{\tau})) \varphi^2(\tau) \|^2_{L^2 \to L^2}$ is bounded independent of $\varphi$ too. This completes the proof of \eqref{time integrability schro}. \\
\end{proof}

\begin{proof}[Proof of \eqref{LED cosine wave}]
Let
\begin{equation} \label{X}
X(t) = e^{C_3t} 
\end{equation}
for $C_3 > 0$ to be chosen. First, decompose the wave propagator according to $X(t)$,
\begin{equation} \label{decompose wave propagator}
\chi \cos(t \sqrt{|H|})\mathbf{1}_{\ge 0}(H) \chi v =  \chi \cos(t \sqrt{|H|})\mathbf{1}_{[0, X(t)]}(H) \chi v + \chi \cos(t \sqrt{|H|}) \mathbf{1}_{ \ge X(t)}(H) \chi v.
\end{equation}
Estimate the second term on the right side of \eqref{decompose wave propagator} using the spectral theorem,
\begin{equation} \label{spectral thm cosine propagator}
\begin{split}
\|\cos(t \sqrt{|H|}) & \mathbf{1}_{ \ge X(t)}(H) \chi v\|_{L^2} \\
&\le \big\| \frac{ \cos(t \sqrt{|H|})}{\sqrt{|H|}} \mathbf{1}_{ \ge X(t)}(H) \big\|_{L^2 \to L^2} \| \sqrt{|H|} \chi v\|_{L^2} \le e^{-C_3 t/ 2} \| \sqrt{|H|} \chi v\|_{L^2}. 
\end{split}
\end{equation}

For the first term on the right side of \eqref{decompose wave propagator}, we use Stone's formula and the change of variable $\tau \mapsto \lambda^2$,
\begin{equation} \label{Stone's formula cosine propagator}
\begin{split}
\chi\cos(t \sqrt{|H|})&\mathbf{1}_{[0, X(t)]}(H) \chi v \\
&= \lim_{\ep \to 0^+} \frac{1}{2\pi i} \int_0^{X(t)} \cos(t \sqrt{\tau}) \chi \big[ (H - (\tau + i\ep))^{-1} - (H - (\tau - i \ep))^{-1} \big] \chi v d\tau \\
&\lim_{\ep \to 0^+} \frac{1}{2\pi i} \int_0^{X(t)}  \lambda (e^{it \lambda } + e^{-it \lambda })   \chi \big[ (H - (\lambda^2 + i\ep))^{-1} - (H - (\lambda^2 - i \ep))^{-1} \big] \chi v d\tau \\
&= \frac{1}{2\pi i} \int_{-X(t)}^{X(t)} \lambda e^{-it \lambda } [ R(\lambda) - R(-\lambda) \big] v d\lambda.
\end{split}
\end{equation}
We may set $\ep  = 0$ in line three due to Theorem \ref{unif resolv est thm} and our hypothesis that $R(\lambda)$ has no resonance at zero. In particular, by \eqref{unif resolv est}, there exists $\theta_1 > 0$ sufficiently small so that $R(\lambda)$ is analytic near the strip $-\theta_1  \le \imag \lambda \le 0$, and
\begin{equation} \label{resolv est in strip for wave decay}
\| \lambda (R(\lambda) - R(-\lambda)) \|_{H^k \to L^2} \le C(|\real \lambda| +1)^{-k}, \qquad -\theta_1  < \imag \lambda < 0, \, k \in \{0,1\},
\end{equation}
for some $C > 0$ independent of $|\real \lambda|$. 

Deform the contour in the last line of \eqref{Stone's formula cosine propagator} into the lower half plane,
\begin{equation} \label{deform contour cosine propagator}
\begin{split}
 \int_{-X(t)}^{X(t)} \lambda e^{-it \lambda }&[ R(\lambda) - R(-\lambda) \big] v d\lambda  \\
 &=   e^{-\theta_1 t} \int_{\real \lambda =  -X(t)}^{\real \lambda = X(t)} (\lambda e^{-it \real \lambda } [ R(\lambda) - R(-\lambda) \big]) \rvert_{\imag \lambda = -\theta_1}v d( \real \lambda) \\
&+ \int_{\imag \lambda = -\theta_1 }^{\imag \lambda = 0} ( \lambda e^{-it \lambda } [ R(\lambda) - R(-\lambda) \big] ) \rvert_{\real \lambda = X(t)} v d (\imag \lambda) \\
&- \int_{\imag \lambda = -\theta_1 }^{\imag \lambda = 0} ( \lambda e^{-it  \lambda } [ R(\lambda) - R(-\lambda) \big] ) \rvert_{\real \lambda = -X(t)} v d (\imag \lambda)
\end{split}
\end{equation} 
Denote the terms in lines two, three, and four of \eqref{deform contour cosine propagator} by $I_2$, $I_3$, and $I_4$, respectively. For some $C > 0$ independent of $t$, and $v$,
\begin{equation} \label{est contour pieces cosine}
\begin{gathered}
\| I_2 \|_{L^2} \le C X (t) e^{-\theta_1 t} \|v\|_{H^1} =  C e^{(C_3-\theta_1 )t} \|v\|_{H^1} , \\
\| I_3 \|_{L^2}, \| I_4 \|_{L^2} \le C X^{-1} (t) \|v\|_{H^1}= C e^{-C_3t} \|v\|_{H^1}.
\end{gathered}
\end{equation}
Setting now $C_3 = \theta_1/2$, \eqref{spectral thm cosine propagator} and \eqref{est contour pieces cosine} conclude the proof of \eqref{LED cosine wave}.\\
\end{proof}

\begin{proof}[Proof of \eqref{LED sine wave}]
The proof of \eqref{LED sine wave} is similar to the proof of \eqref{LED cosine wave}. We use the same $X(t)$ as in \eqref{X}, and this time find,
\begin{equation*} 
\big \|\frac{\sin(t \sqrt{|H|})}{\sqrt{|H|}} \mathbf{1}_{ \ge X(t)}(H) \chi v\|_{L^2}  \le e^{-C_3 t/ 2} \| v\|_{L^2},
\end{equation*}
and 
\begin{equation*} 
\chi\frac{\sin(t \sqrt{|H|})}{\sqrt{|H|}} \mathbf{1}_{[0, X(t)]}(H) \chi v = \frac{1}{2\pi } \int_{-X(t)}^{X(t)} e^{-it \lambda } [ R(\lambda) - R(-\lambda) \big] v d\lambda.
\end{equation*}
Once again, we deform the contour as in \eqref{deform contour cosine propagator}, apply \eqref{resolv est in strip for wave decay}, and fix $C_3 = \theta_1/2$. This establishes \eqref{LED sine wave}.\\
\end{proof}
 
 \appendix

\section{Self-adjointness of the magnetic Schrödinger operator} \label{self adj oned section}
 
 In this appendix, we show
 \begin{equation*} 
 P(h) = \beta ( -h^2\partial_x(\alpha \partial_x) + hbD_x + hD_xb) +V : L^2(\R ; \beta^{-1} dx) \to L^2(\R; \beta^{-1}dx ),
 \end{equation*} 
is self-adjoint with respect the domain $\mathcal{D}$ defined in \eqref{D}. The assumptions on the coefficients are more general than those prescribed in Section \ref{intro section}. Suppose
 \begin{gather}
 \alpha, \, \beta \in L^\infty(\R; (0, \infty)) \text{ and $\inf \alpha, \, \inf \beta > 0$, } \label{alpha beta appendix} \\
 V = V_0 + V_1, \text{ where $V_0$ is a finite signed Borel measure on $\R$ and $V_1 \in L^\infty(\R; \R)$}, \label{V appendix}\\
 b = b_0 + b_1, \text{ where $b_0 \in L^2(\R; \R)$ and $b_1 \in  L^\infty(\R; \R)$.} \label{b appendix} 
 \end{gather} 
Since $\beta$ is bounded from above and below by positive constants, $L^2(\R; \beta^{-1} dx)  = L^2(\R;  dx)$ and their norms are equivalent.

 \begin{lemma} \label{self-adjointness 1D lemma}
Under \eqref{alpha beta appendix}, \eqref{V appendix}, and \eqref{b appendix}, the subspace $\mathcal{D}$ given by \eqref{D} is dense in $L^2(\beta^{-1}dx)$. The operator $P(h)$ equipped with domain $\mathcal{D}$ and defined by \eqref{P(h)u} and is self-adjoint on $L^2(\R ; \beta^{-1}dx)$.
 \end{lemma}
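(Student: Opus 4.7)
The plan is to construct $P(h)$ as the unique self-adjoint operator associated via the first representation theorem to the sesquilinear form
\begin{equation*}
q(u,v) = h^2\int_\R \alpha \overline{u}' v'\, dx + ih \int_\R b(v\overline{u}' - \overline{u} v')\, dx + \int_\R \overline{u}_c v_c \beta^{-1}V_0 + \int_\R V_1 \overline{u} v \beta^{-1}\, dx
\end{equation*}
on form domain $H^1(\R)$, with reference Hilbert space $L^2(\R;\beta^{-1}dx)$. Symmetry is immediate under $u\leftrightarrow v$ together with complex conjugation. For semiboundedness, I would use $h^2\int\alpha|u'|^2 \ge h^2(\inf\alpha)\|u'\|_{L^2}^2$ as the positive anchor; the magnetic contribution is bounded in modulus by $2h\int|b||u||u'|\,dx$, and the $V_0$ term by $(\inf\beta)^{-1}\|u\|_{L^\infty}^2\|V_0\|$. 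Splitting $b = b_0+b_1$ and invoking the one-dimensional Sobolev embedding $\|u\|_{L^\infty}^2 \le \|u\|_{L^2}\|u'\|_{L^2}$ together with Young's inequality absorbs the $b_0$- and $V_0$-pieces into a small multiple of $\|u'\|_{L^2}^2$ plus a constant times $\|u\|_{L^2(\beta^{-1}dx)}^2$, while the bounded parts $b_1,V_1\in L^\infty$ produce only lower-order mass terms. Closedness of $q$ follows because the form norm is equivalent to the standard $H^1(\R)$-norm.

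By the representation theorem there is a unique self-adjoint operator $A$ on $L^2(\R;\beta^{-1}dx)$ whose domain $\mathcal{D}(A)$ consists of those $u\in H^1(\R)$ for which $v\mapsto q(v,u)$ extends to a bounded linear functional on $L^2(\R;\beta^{-1}dx)$, with $Au\in L^2(\R;\beta^{-1}dx)$ its Riesz representer. Density of $\mathcal{D}(A)$ in $L^2(\R;\beta^{-1}dx)$ is automatic from self-adjointness, so it remains to verify $\mathcal{D}(A) = \mathcal{D}$ with $Au = P(h)u$.

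For $u\in\mathcal{D}(A)$, testing $q(v,u) = \langle v,Au\rangle_{L^2(\beta^{-1}dx)}$ against $v\in C_c^\infty(\R)$ and transferring $\overline{v}'$ onto $h^2\alpha u' + ih bu$ as a distributional derivative yields
\begin{equation*}
(h^2\alpha u' + ih bu)' = -\beta^{-1}Au + \beta^{-1}V_1 u - ih bu' + \beta^{-1} u_c V_0,
\end{equation*}
whose right-hand side is locally a finite signed Borel measure. Proposition \ref{ftc bv prop} then supplies a locally BV representative of $h^2\alpha u' + ih bu$. Multiplying the identity by $-\beta$ and using $-ih\beta bu' = h\beta b D_x u$ recasts it as $Au = \beta d(-h^2\alpha u' - ih bu) + h\beta b D_x u + u_c V$, which is exactly \eqref{P(h)u}. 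The reverse inclusion $\mathcal{D}\subseteq \mathcal{D}(A)$ follows by running the computation backwards: for $u\in\mathcal{D}$, the BV product rule (Proposition \ref{prod rule bv prop}) permits integration by parts against any $v\in H^1(\R)$ and recovers $q(v,u) = \langle v,P(h)u\rangle_{L^2(\beta^{-1}dx)}$.

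The main obstacle I anticipate is justifying the global $L^\infty(\R)$ requirement placed on $h\alpha u' + ibu$ in \eqref{D}, since local BV alone is insufficient. For the inclusion $\mathcal{D}(A)\subseteq \mathcal{D}$, I would derive pointwise bounds by integrating the displayed distributional identity from a base point: the $V_0$-contribution is controlled by $\|u\|_{L^\infty}\|V_0\|$, the $b_0 u'$ piece lies in $L^1(\R)$ by Cauchy-Schwarz, and the remaining $L^2$-pieces are tamed by choosing the base point along a sequence tending to $\pm\infty$ where $h\alpha u' + ibu$ has small oscillation (using that $u\in H^1(\R) \subset C_0(\R)$). For the opposite inclusion, the $L^\infty$ hypothesis is what legitimizes integration by parts against test functions in $H^1(\R)$ rather than merely $C_c^\infty(\R)$, by forcing boundary contributions to vanish. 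With these points in hand, the two inclusions close, completing the identification $\mathcal{D}(A) = \mathcal{D}$ and hence the proof.
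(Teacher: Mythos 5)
Your proposal is correct and mostly parallels the paper's strategy, but the identification of the operator domain is organized differently. The paper first reduces to $V_1 = 0$ via Kato--Rellich, introduces the \emph{maximal} domain $\mathcal{D}_{\max}$ of $u \in L^2$ for which $-h\alpha u' - ibu$ is locally BV and $P(h)u \in L^2$ distributionally, and proves the stronger statement $\mathcal{D}_{\max} = \mathcal{D}$ by a self-contained bootstrap: it derives a quadratic system of inequalities \eqref{bd x square}--\eqref{bd z square} for $\mathsf{x} = \|f\|_{L^2(-a,a)}$, $\mathsf{y} = \sup_{[-a,a]}|u_c|$, $\mathsf{z} = \sup_{[-a,a]}|f|$ (with $f = -h^2\alpha u' - ihbu$) and shows these are bounded uniformly in $a$, which yields $u \in H^1$, $f \in L^2 \cap L^\infty$ simultaneously. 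Then symmetry of $(P,\mathcal{D})$ follows from the computation $\langle P(h)u,v\rangle = q(u,v)$, and the Friedrichs operator $(A,\mathcal{D}_1)$ is shown to satisfy $A \subseteq P$, whence $P^* \subseteq A^* = A \subseteq P \subseteq P^*$. You instead identify $\mathcal{D}(A) = \mathcal{D}$ directly and keep $V_1$ inside the form. Your route is leaner because $u \in \mathcal{D}(A)$ already lies in the form domain $H^1(\R)$, so the quantities $\mathsf{x}$ and $\mathsf{y}$ are controlled a priori and only the $L^\infty$ bound on $f$ needs to be extracted; the cost is that you do not get the extra characterization $\mathcal{D}_{\max} = \mathcal{D}$ (which the paper's phrasing delivers for free). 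One imprecision: in your sketch of the $L^\infty$ bound you invoke ``$u \in H^1(\R) \subset C_0(\R)$'' as the source of a base point where $f$ is small. The relevant fact is rather that $f = h\alpha u' + ibu$ itself is in $L^2(\R)$ (since $u' \in L^2$, $b_0 u \in L^2$, $b_1 u \in L^2$), which guarantees that within distance one of any $x_0$ there is a point $a$ with $|f(a)| \le \|f\|_{L^2}$; then $|f(x_0) - f(a)| = |\int_{(a,x_0]} df|$ is bounded because $df$ decomposes into an $L^2$ density (integrated over a unit interval), an $L^1$ density, and a finite measure. The decay of $u$ is not what drives this. Correcting that attribution, your argument is sound and somewhat more economical than the paper's, at the expense of the stronger maximal-domain statement.
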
 

  \begin{proof}
Since multiplication by $V_1$ is a bounded operator on $L^2(\R; \beta^{-1} dx)$, by the Kato-Rellich theorem \cite[Theorem 6.4]{te}, we suppose $V_1 = 0$ without loss of generality. 

Let $\mathcal D_\textrm{max} \supseteq \mathcal{D}$ be the set of $u \in L^2(\R)$ such that $-h \alpha u' - ibu$ has locally bounded variation, and $P(h)u$, defined in the distributional sense by \eqref{P(h)u}, belongs to $L^2(\R)$. We use $u_c$ to denote the unique absolutely continuous representative of $u \in \mathcal{D}_\textrm{max}$. 

First we prove $\mathcal D_\textrm{max} \subseteq \mathcal D$. Since the reverse containment is trivial, we will conclude $\mathcal D_\textrm{max} = \mathcal D$. Our strategy is as follows. For $u \in \mathcal D_{\textrm{max}}$, fix a representative $f$ of $-h^2 \alpha u' - i h b u$ with locally bounded variation. If necessary, modify $f$ on a set of Lebesgue measure zero so $f^A(x) = f(x)$ for all $x \in \R$ (this simplifies steps that involve \eqref{ftc} or \eqref{e:prod}).  For $a > 0$, define
\begin{equation*}
 \mathsf{x} \defeq \| f\|_{L^2(-a,a)}, \qquad  \mathsf{y} \defeq \sup_{[-a,a]} |u_c|, \qquad \mathsf{z} \defeq  \sup_{[-a,a]} |f|.
\end{equation*}
We show that a system of inequalities holds: 
\begin{gather}
\mathsf{x}^2 \le  C_1 + C_2 \mathsf{x} + C_3\mathsf{z} + C_4\mathsf{y}^2 + C_5\mathsf{y} \mathsf{z},  \label{bd x square} \\
 \mathsf{y}^2 \le C_6 + C_7\mathsf{x},  \label{bd y square} \\  
 \mathsf{z}^2 \le C_8 + C_{9}\mathsf{x} + C_{10} \mathsf{y} \mathsf{z}.  \label{bd z square}
\end{gather}
for constants $C_j > 0$, $1 \le j \le 10$, which may depend on $h$, but are independent of $a$. After using \eqref{bd y square} to eliminate $\mathsf{y}$ from \eqref{bd x square} and \eqref{bd z square} , we obtain a system in $\mathsf{x}$ and $\mathsf{z}$ with quadratic left sides and subquadratic right sides. Hence $\mathsf{x}$, $\mathsf{y}$, and $\mathsf{z}$ are bounded in terms of the $C_j$. In particular $u, f \in L^\infty(\R)$, $f \in L^2(\R)$, and since $u' = -h^{-2} \alpha^{-1} f - i h^{-1} \alpha^{-1} bu$ Lebesgue almost everywhere, $u' \in L^2(\R)$ too. Thus  $\mathcal D_{\max} \subseteq \mathcal D$.

We now turn to establishing \eqref{bd x square}, \eqref{bd y square}, and \eqref{bd z square}. For $u \in \mathcal D_\textrm{max}$, $f = -h^2\alpha u_c' - ihbu_c$ Lebesgue almost everywhere, so
\begin{equation} \label{bound L2 u prime 1}
\tfrac{1}{\sup \alpha} \int_{(-a,a)} |f|^2dx \le \int_{(-a,a)} \tfrac{1}{\alpha} |f|^2dx = \int_{(-a,a)}  (-h^2 \overline{u}_c' + i h \tfrac{b}{\alpha}\overline{u}_c) f dx. 
\end{equation}
By \eqref{e:prod} and $f^A = f$, $\overline{u}'_c f = d(\overline{u}_c f ) - \overline{u}_c df$. Inserting this into the right side of \eqref{bound L2 u prime 1} and applying \eqref{ftc} gives
\begin{equation} \label{bound L2 u prime 2}
\begin{split}
\tfrac{1}{\sup \alpha}  \int_{(-a,a)} |f|^2dx &\le  \tfrac{h}{\inf \alpha} (\| b_0\|_{L^2} \| u \|_{L^2} \mathsf{z}  + \|b_1\|_{L^\infty}  \|u\|_{L^2}  \mathsf{x}) \\
&-h^{2}((\overline{u}_cf)^{L}(a) -(\overline{u}_cf)^{R}(-a)) + h^{2} \int_{(-a,a)}  \overline{u}_c df  \\
&\le \tfrac{h}{\inf \alpha} (\| b_0\|_{L^2} \| u \|_{L^2} \mathsf{z}  + \|b_1\|_{L^\infty}  \|u\|_{L^2}  \mathsf{x}) + 2h^{2} \mathsf{y} \mathsf{z} +h^{2} \int_{(-a,a)} \overline{u}_c df.  
\end{split}
\end{equation}
Now use that, as measures on bounded Borel subsets of $\R$,
\begin{equation*}
\begin{split}
 df &= \beta^{-1} P(h)u - h b D_x u - \beta^{-1} u_c V_0  \\
&= \beta^{-1}P(h)u -i \tfrac{b}{h \alpha}f + \tfrac{b^2}{\alpha} u_c - \beta^{-1} u_c V_0.
\end{split}
\end{equation*}
Thus, since $b^2 \le 2(b^2_0 + b^2_1)$,
\begin{equation} \label{int uc df}
\begin{split}
 \int_{(-a,a)} \overline{u}_c df &= \int_{(-a,a)}  \overline{u}_c (\beta^{-1}P(h)u -i \tfrac{b}{h \alpha}f + \tfrac{b^2}{\alpha} u_c)dx  - \int_{(-a,a)}  \beta^{-1} |u_c|^2 V_0  \\
 &\le \tfrac{1}{\inf \beta}  \|P(h)u\|_{L^2}\|u\|_{L^2} + \tfrac{\|b_0\|_{L^2} \| u\|_{L^2}}{h \inf \alpha} \mathsf{z} +\tfrac{\|b_1\|_{L^\infty} \| u\|_{L^2}}{h \inf \alpha} \mathsf{x} \\
&+ \tfrac{2 \|b_0\|^2_{L^2}}{\inf \alpha} \mathsf{y}^2 + \tfrac{2 }{\inf \alpha} \|b_1\|^2_{L^\infty}  \|u\|^2_{L^2} +\tfrac{
\|V_0\|}{\inf \beta}  \mathsf{y}^2.
\end{split}
\end{equation}
Combining \eqref{bound L2 u prime 1}, \eqref{bound L2 u prime 2} and \eqref{int uc df} yields a bound of the form \eqref{bd x square}.

Next,
\begin{equation*}
\begin{split}
\sup_{[-a,a]}|u_c|^2 &=\sup_{x \in [-a,a]}  \big(|u_c(0)|^2 + 2 \real \int_0^x \overline{u}'_c u_c  dx \big) \\
&=  \sup_{x \in [-a,a]}  \big(|u_c(0)|^2 + 2 \real \int_0^x (- \tfrac{1}{h^2 \alpha} \overline{f} + \tfrac{ib}{h \alpha}\overline{u}_c) u_c  dx \big) \\
 &\le |u_c(0)|^2 + \tfrac{2 \|u\|_{L^2}}{h^2 \inf \alpha} \mathsf{x},
 \end{split}
\end{equation*}
which is \eqref{bd y square}. 

If $x \in (0, a)$ and $f$ is continuous at $x$ then by \eqref{ftc}, \eqref{e:prod} and $df = \beta^{-1} P(h)u - h b D_xu - \beta^{-1} u_c V_0$,
\begin{equation} \label{bound u prime}
\begin{split}
|f&|^2(x) \\
&= (|f|^2)^{R}(0) + \int_{(0,x]} d (\overline{f} f)  \\
&=  (|f|^2)^{R}(0)+ 2 \real \int \overline{f} df \\
&= (|f|^2)^{R}(0) + 2 \real \Big( \int_{(0, x]}\overline{f} (\beta^{-1} P(h)u - hbD_xu)d x -  \int_{(0, x]} \beta^{-1} \overline{f} u_c V_0 \Big) \\
&\le (|f|^2)^{R}(0) + \tfrac{2 \|V_0\|}{\inf \beta} \mathsf{y} \mathsf{z} \\
&+  2 \real \int_{(0, x]}\overline{f} (\beta^{-1} P(h)u - hbD_xu)d x.
\end{split}
\end{equation}
Computing further, using $-hbD_xu = -ih^{-1}\alpha^{-1}b f + \alpha^{-1} b^2 u_c$ Lebesgue almost everywhere, and again $b^2 \le 2b^2_0 + 2b^2_1,$
\begin{equation} \label{bound u prime 2}
\begin{split}
2 \real \int_{(0, x]}&\overline{f} (\beta^{-1} P(h)u - hbD_xu)d x = 2 \real \int_{(0, x]} \overline{f} (\beta^{-1} P(h)u -i \tfrac{b}{h \alpha}f + \tfrac{b^2}{\alpha} u_c )d x\\
& \le \tfrac{2}{\inf \beta} \|P(h) u\|_{L^2} \mathsf{x} + \tfrac{2}{\inf \alpha} \|b_0\|^2_{L^2} \mathsf{y}\mathsf{z} + \tfrac{2 \|b_1\|^2_{L^\infty}  \| u \|_{L^2} }{\inf \alpha} \mathsf{x}.
\end{split}
\end{equation}
Similar estimates hold for $x \in (-a,0)$ at which $f$ is continuous. Thus $|f(x)|^2 \le C_9 + C_{9} \mathsf{x} + C_{10}\mathsf{y} \mathsf{z}$ at every point of continuity of $f$. But this implies \eqref{bd z square} because
\begin{equation} \label{f square le f square A}
\begin{split}
|f(x)|^2 = |f^A(x)|^2 &= 4^{-1}|f^L(x) + f^R(x)|^2 \\
&\le 2^{-1} (|f^L(x)|^2 + |f^R(x)|^2) = 2^{-1}((|f(x)|^2)^L + (|f(x)|^2)^R) = (|f(x)|^2)^A
\end{split}
\end{equation}
and for all $x \in [-a,a]$, $(|f(x)|^2)^A$ is a limit of values of $|f|^2$ at which $f$ is continuous.

Next, equip $P(h)$ with the domain $\mathcal D_\textrm{max} = \mathcal D$; we show $P(h)$ is symmetric. Let $u, \, v \in \mathcal{D}$. Since $P(h)u = \beta d( -h^2 \alpha u' - i h bu)  + h \beta b D_xu +  u_cV_0$ as distributions and hence as Borel measures, 

\begin{equation} \label{prod to form}
\begin{split}
\langle P(h)u, v \rangle_{L^2(\beta^{-1} dx)} &= \int_{\R} (\overline{P(h)u}) v \beta^{-1}dx \\
&=  \int_{\R} vd( -h^2 \alpha \overline{u}' + ihb\overline{u}) + \int_{\R}[ ih b \overline{u}' v dx   + \overline{u}_c v_c \beta^{-1} V_0] \\
&=  h^2 \int_{\R} \alpha \overline{u}' v' dx + ih \int_{\R} b( \overline{u}' v - \overline{u} v') dx + \int_{\R} \overline{u}_c v_c \beta^{-1} V_0 \\
& \qefed q(u,v).
\end{split}
\end{equation}
Similarly, $\langle u, P(h)v \rangle_{L^2(\beta^{-1} dx)}$ coincides with the third line of \eqref{prod to form}. Thus $P(h)$ is symmetric. 

The last step is to establish that $(P, \mathcal{D})$ is densely defined and $P^* \subseteq P$. For this, define on $H^1(\R)$ the sesquilinear form $q$ as in \eqref{prod to form}. For any $u \in H^1(\R)$, $\|u\|^2_{L^\infty} \le \| u \|_{L^2} \| u'\|_{L^2}$. Therefore, by Young's inequality: 
\begin{equation*}
ab \le p^{-1} \gamma^{1- p} a^p + q^{-1} \gamma b^q \quad \text{for all $\gamma > 0$ and all $p, \, q \ge 1$ such that $p^{-1} + q^{-1} =1$},
\end{equation*}
we have
\begin{equation} \label{bdd below}
\begin{gathered}
 \big| \int |u_c |^2 V_0\big| \le \|V_0\| \|u\|_{\infty}^2 \le \tfrac{\|V_0\|^2}{h^2 \inf \alpha } \| u\|^2_{L^2} + \tfrac{h^2 \inf \alpha }{4 } \| u'\|^2_{L^2} , \\
\begin{split} 2 h \big| \int_{\R} b u \overline{u}'  dx \big| &\le 2h\|b_0\|_{L^2} \|u\|_{L^\infty} \|u'\|_{L^2} + 2h \|b_1\|_{L^\infty} \| u\|_{L^2} \|u'\|_{L^2}  \\
 &\le  \tfrac{864 \| b_0 \|^4_{L^2}}{h^2 \inf^3 \alpha} \| u\|^2_{L^2} + \tfrac{ h^2 \inf \alpha}{8} \| u'\|^2_{L^2}  + \tfrac{8 \| b_1 \|^2_{L^\infty}}{ \inf \alpha} \| u\|^2_{L^2} + \tfrac{ h^2 \inf \alpha}{8}\| u'\|^2_{L^2}. 
 \end{split}
 \end{gathered}
\end{equation}
Note that to estimate the term $2h\|b_0\|_{L^2} \|u\|_{L^\infty} \|u'\|_{L^2} \le 2h \|b_0 \|_{L^2} \| u\|^{1/2}_{L^2} \| u'\|_{L^2}^{3/2}$, we used Young's inequality with $a = \|b_0 \|_{L^2} \| u\|^{1/2}_{L^2}$, $b = \| u'\|_{L^2}^{3/2}$, $p = 4$, $p = 4/3$, and $\gamma = h \inf \alpha /12$. We thus find,
\begin{equation} \label{norm equiv}
q(u,u) \ge - \big( \tfrac{\| V_0\|^2}{h^2 \inf \alpha} +  \tfrac{864 \| b_0 \|^4_{L^2}}{h^2 \inf^3 \alpha} +  \tfrac{4 \|b_1 \|^2_{\infty}}{\inf \alpha} \big) \|u\|^2_{L^2} + \tfrac{h^2 \inf \alpha}{2} \|u'\|^2_{L^2}, 
\end{equation}
so $q$ is semibounded and closed. 
 

By Friedrichs' result \cite[Theorem 2.14]{te}, there is a unique, densely defined, self-adjoint operator $(A, \mathcal{D}_1)$ with 
\begin{equation} \label{D1}
\begin{gathered}
\mathcal{D}_1 = \{ u \in H^1(\R) : \text{there exists $\tilde{u} \in L^2(\R)$ with $q(u,v) = \langle \tilde{u}, v \rangle_{L^2(\beta^{-1}dx)}$ for all $v \in H^1(\R)$} \}, \\
A u  = \tilde{u}.  
\end{gathered}
\end{equation}
By \eqref{prod to form}, this means that, for any $u \in \mathcal{D}_1$,
\begin{equation*}
\int_{\R} \overline{(h^2\alpha u' +ih b u)} v' dx  = \int_{\R} (\overline{A u}  \beta^{-1} -i b \overline{u}') v dx  - \int  \overline{u}_c v_c \beta^{-1} V_0, \qquad v \in H^1(\R).
\end{equation*}
Thus the distributional derivative of $\overline{(-h^2\alpha u' - ihb u)}$ is the measure $(\overline{(Au)} \beta^{-1} - ih b \overline{u}' ) - \overline{u}_c \beta^{-1} V_0$. Proposition \ref{ftc bv prop} then implies $h\alpha u' + ibu$ has locally bounded variation, and $\beta d(-h^2\alpha u' - ihb u) - ih \beta b u' + u_c  V_0 = Au \in L^2(\R)$ Therefore, $(A, \mathcal{D}_1) \subseteq (P, \mathcal{D}_{\max})$, so $P^* \subseteq A^* = A \subseteq P$. Since we already showed $P \subseteq P^*$ (symmetricity), $P^* = P$ as desired. \\
\end{proof} 

\section{Proof of \eqref{pre penult est}} \label{deal with point masses appendix}

We begin from lines two and three of \eqref{just after integrate}. For each $j$, make the change of variable $x \mapsto \eta x + x_j$, yielding
\begin{equation*}
\begin{split}
\sum_{x_j \neq 0}&  \Big[ \int |w_\eta||u|^2 ( \tau \pi^{-1/2} \eta^{-1} W_j  e^{-((x-x_j)/\eta)^2} dx- (1 + \gamma_j^{-1}) \mu_j \delta_{x_j} )\\
&+ \int |w_\eta||h \alpha u' + ibu|^2 ( \pi^{-1/2} \eta^{-1} W_j   e^{-((x-x_j)/\eta)^2} dx- \gamma_j \mu_j \delta_{x_j} ) \Big]  \\
&=   \sum_{x_j \neq 0} \Big[ ( \tau \pi^{-1/2} W_j \int  |w_\eta(x_j+ \eta x)|  |u(x_j + \eta x)|^2 e^{-x^2}dx  - (1 + \gamma_j^{-1}) \mu_j |w_\eta(x_j)| |u(x_j)|^2 ) \\
&+ ( \pi^{-1/2} W_j \int  |w_\eta(x_j+ \eta x)| |(h\alpha u' + ibu)(x_j + \eta x)|^2 e^{-x^2}dx  \\
&- \gamma_j \mu_j |w_\eta(x_j)| |(h\alpha u' + ibu)(x_j )|^2 ) \Big]. \\
\end{split}
\end{equation*}
Thus to find the limit as $\eta \to 0^+$, we must compute  $\lim_{\eta \to 0^+} |w_\eta(x_j)|$ and $\lim_{\eta \to 0^+} |w_\eta(x_j+ \eta x)|$, which by \eqref{weta} rests upon finding $\lim_{\eta \to 0^+} q_{1, \eta} (x_j)$ and $\lim_{\eta \to 0^+} q_{1,\eta} (x_j+ \eta x)$.
From \eqref{q1 BV},
\begin{equation*}
\begin{split}
q_{1, \eta} (x_j) - \sgn(x_j) \int_0^{x_j} \mu_c &= \pi^{-1/2} \eta^{-1} \sgn(x_j) \sum_{x_\ell \neq 0} \int_{0}^{x_j} W_{\ell} e^{-((x'-x_{\ell})/\eta)^2} dx' \\
&= \pi^{-1/2} \sum_{x_\ell \neq 0} \int_{-\sgn(x_j)\frac{x_\ell}{\eta}}^{\sgn(x_j)\frac{x_j-x_\ell}{\eta}}    W_\ell  e^{-(x')^2} dx' \\
 &\to \frac{1}{2} W_j + \sum_{\ell \, : \, \sgn(x_j) x_j > \sgn(x_j) x_{\ell} > 0} W_\ell,
 \end{split}
 \end{equation*}
 and 
 \begin{equation*}
 \begin{split}
q_{1, \eta} (x_j + \eta x) &- \sgn(x_j + \eta x) \int_0^{x_j+ \eta x} \mu_c \\
&= \pi^{-1/2} \eta^{-1} \sgn(x_j + \eta x) \sum_{x_\ell \neq 0} \int^{x_j + \eta x}_0 W_{\ell} e^{-((x'-x_{\ell})/\eta)^2} dx' \\
&=\pi^{-1/2} \sgn(x_j) \sgn(x_j + \eta x) \sum_{x_\ell \neq 0} \int_{-\sgn(x_j) \frac{x_\ell}{\eta}}^{\sgn(x_j)x + \sgn(x_j) \frac{x_j-x_\ell}{\eta}}   W_\ell  e^{-(x')^2} dx'\\
 &\to \pi^{-1/2}  W_j \int^{\sgn(x_j)x}_{-\infty} e^{-(x')^2}dx' + \sum_{\ell \, : \, \sgn(x_j) x_j > \sgn(x_j) x_{\ell} > 0} W_\ell. 
 \end{split}
\end{equation*}
The upshot is
\begin{equation} \label{weta limits}
\begin{gathered}
\lim_{\eta \to 0^+} |w_\eta(x_j)| = e^{\Gamma_j} (e^{q_2(x_j)} -1)e^{W_j/2}, \\
\lim_{\eta \to 0^+} |w_\eta(x_j+ \eta x)| = e^{\Gamma_j} (e^{q_2(x_j)} -1)\exp \big( \pi^{-1/2} W_j \int^{\sgn(x_j)x}_{-\infty} e^{-(x')^2}dx' \big),
\end{gathered}
\end{equation}
where 
\begin{equation*}
\Gamma_j \defeq \sgn(x_j) \int_0^{x_j} \mu_c + \sum_{\ell \, : \, \sgn(x_j) x_j > \sgn(x_j) x_{\ell} > 0} W_\ell. 
\end{equation*}

Now we use \eqref{weta limits} and the dominated convergence theorem to find the limit as $\eta \to 0^+$ of lines two and three of \eqref{just after integrate}. For this step it is helpful to highlight that $u$ is continuous,
\begin{equation*}
\lim_{\eta \to 0^{+}} |(h\alpha u' + ibu)(x_j + \eta x)|^2 = \begin{cases} (|(h\alpha u' + ibu)(x_j)|^2)^L & x < 0, \\
(|(h\alpha u' + ibu)(x_j)|^2)^R & x > 0,
\end{cases}
\end{equation*}
and $\pi^{-1/2} W_j  \exp(\pi^{-1/2}  W_j \int^{x}_{-\infty} e^{-(x')^2}dx') e^{-y^2} = \partial_x( \exp(\pi^{-1/2}  W_j \int^{x}_{-\infty} e^{-(x')^2}dx'))$. We find
\begin{equation} \label{eta limit u terms}
\begin{split}
\tau \pi^{-1/2}& W_j \int  |w_\eta(x_j+ \eta x)| |u(x_j + \eta x)|^2 e^{-x^2}dx  - (1 + \gamma_j^{-1}) \mu_j |w_\eta(x_j)| |u(x_j)|^2 \\
&\to  |u(x_j)|^2 e^{\Gamma_j} ( e^{q_2(x_j)} - 1)  (\tau \pi^{-1/2} W_j \int e^{\pi^{-1/2}  W_j \int^{\sgn(x_j)x}_{-\infty} e^{-(x')^2}dx' } e^{-x^2}dx -(1 + \gamma_j^{-1}) \mu_j e^{\mu_j/2}) \\
&= |u(x_j)|^2 e^{\Gamma_j} ( e^{q_2(x_j)} - 1)(\tau e^{W_j} - 1 - (1 + \gamma_j^{-1}) \mu_j e^{W_j/2}),
\end{split}
\end{equation}
 and 
 
 \begin{equation} \label{eta limit u prime terms 1}
\begin{split}
  \pi^{-1/2}& \int W_j  |w_\eta(x_j+ \eta x)| |(h\alpha u' + ibu)(x_j + \eta x)|^2 e^{-x^2}dx  - \gamma_j \mu_j |w_\eta(x_j)| |(h\alpha u' + ibu)(x_j )|^2  \\
&\to e^{\Gamma_j}( e^{q_2(x_j)} - 1) \big( \pi^{-1/2} W_j (|(h\alpha u' + ibu)(x_j)|^2)^L \int_{-\infty}^0 e^{\pi^{-1/2} \mu_j \int^{\sgn(x_j)x}_{-\infty}  e^{-(x')^2}dx' } e^{-x^2}dx \\ 
&+ \pi^{-1/2}W_j(|(h\alpha u' + ibu)(x_j)|^2)^R \int_{0}^\infty e^{\pi^{-1/2} \mu_j \int^{\sgn(x_j)x}_{-\infty} e^{-(x')^2}dx' } e^{-x^2}dx \\
&-\gamma_j \mu_j |(h\alpha u' + ibu)(x_j)|^2 e^{W_j/2} \big).
\end{split}
\end{equation}
Since 
\begin{equation*} 
\begin{split}
\pi^{-1/2} W_j \int_{-\infty}^0 e^{\pi^{-1/2} W_j \int^{\sgn(x_j)x}_{-\infty}  e^{-(x')^2}dx' } e^{-x^2}dx &= \int_{-\infty}^0 \partial_x ( e^{\pi^{-1/2} W_j \int^{\sgn(x_j)x}_{-\infty}  e^{-(x')^2}dx' } ) dx \\
&= \begin{cases}
e^{W_j/2} - 1 & x_j > 0, \\
e^{W_j} - e^{W_j/2} & x_j < 0,  
\end{cases}
\end{split}
\end{equation*}
and a similar calculation holds for $\pi^{-1/2} W_j \int_{0}^\infty \exp(\pi^{-1/2} \mu_j \int^{\sgn(x_j)x}_{-\infty} e^{-(x')^2}dx' )e^{-x^2}dx$, by \eqref{eta limit u prime terms 1},
 \begin{equation} \label{eta limit u prime terms 2}
\begin{split}
  \pi^{-1/2}& \int W_j  |w_\eta(x_j+ \eta x)| |(h\alpha u' + ibu)(x_j + \eta x)|^2 e^{-x^2}dx  - \gamma_j \mu_j |w_\eta(x_j)| |(h\alpha u' + ibu)(x_j )|^2  \\
&\ge  e^{\Gamma_j} (  e^{q_2(x_j)} - 1) (( |(h\alpha u' + ibu)(x_j)|^2)^A2(e^{W_j/2} - 1) - \gamma_j \mu_j e^{W_j/2} |(h\alpha u' + ibu)(x_j)|^2  ) \\
& \ge e^{\Gamma_j} (  e^{q_2(x_j)} - 1) |(h\alpha u' + ibu)(x_j)|^2  (2(e^{W_j/2} - 1) - \gamma_j \mu_j e^{W_j/2}  ),
\end{split}
\end{equation}
To go from the second to third line of \eqref{eta limit u prime terms 2}, we used $(h\alpha u' + ibu)^A =(h\alpha u' + ibu)$ and that $(|h\alpha u' + ibu|^2)^A \ge |h\alpha u' + ibu|^2$, see \eqref{f square le f square A}.

Inspecting the last line of \eqref{eta limit u terms} and the last line of \eqref{eta limit u prime terms 2}, it is evident that we need to fix the $\gamma_j$ and the $W_j$ so that 
\begin{equation*} 
\begin{gathered}
\tau e^{W_j} -1 - (1 + \gamma_j^{-1}) \mu_j e^{W_j/2} \ge 0, \\
2(e^{W_j/2} -1) - \gamma_j \mu_j e^{W_j/2} \ge 0.
\end{gathered}
\end{equation*}
Take $W_j = M \mu_j$ for $M \ge 1$ sufficiently large to be chosen, and $\gamma_j = e^{-W_j/4}$. So it suffices to have
\begin{equation} \label{fix Wj}
\begin{gathered}
\tau e^{M \mu_j} -1 - 2 \mu_j e^{3M \mu_j /4} \ge 0, \\
2(e^{M \mu_j/2} -1) - \mu_j e^{M \mu_j/4} \ge 0.
\end{gathered}
\end{equation}

 Let us examine the second line of \eqref{fix Wj}:
 
  \begin{equation*}
2(e^{M\mu_j /2} -1) - \mu_j e^{ M \mu_j/4} = e^{M\mu_j/4}(2(e^{M \mu_j /4}- e^{-M \mu_j/4}) - \mu_j ),
\end{equation*}
and 
\begin{equation*}
2(e^{M \mu_j /4}- e^{-M \mu_j/4}) - \mu_j  \ge 2(e^{M \mu_j /4}- 1) - \mu_j \ge (\tfrac{M}{2} - 1) \mu_j 
\end{equation*}
which is nonnegative for $M \ge 2$. Turning to the first line of \eqref{fix Wj}, 
\begin{equation*}
\begin{split}
\tau e^{M \mu_j} -1 - 2 \mu_j e^{3M \mu_j /4}  &= e^{ 3M \mu_j /4}(\tau e^{ M \mu_j /4}  - e^{-3 M \mu_j /4} -  2 \mu_j ) \\
&\ge e^{ 3 M \mu_j /4} (\tfrac{M \tau }{4} - 2 ) \mu_j
\end{split}
\end{equation*}
is nonnegative for $M \ge 8 \tau^{-1}$. Therefore, taking $M = \max(2, 8 \tau^{-1})$ yields \eqref{pre penult est}.

\section{Simple operators with no resonance at zero} \label{no zero resonance appendix}

In this appendix we give simple examples of operators $H$ as in \eqref{H} that do not have a resonance at zero.
Take $\alpha = \beta = 1$, and let $V = M \mathbf{1}_{[-1,1]}$ , $b = \mathbf{1}_{[-1,1]}$ be indicator functions, where $M > 0$ is to be chosen. Let $u$ in the domain of $H$ have the form $u \in (H - i\ep)^{-1} (|x| + 1)^{-\frac{3 +\delta}{2}} L^2(\R)$ for $\ep, \delta > 0$. Then by \eqref{prod to form}, for all $\gamma > 0$,
\begin{equation} \label{choose M here}
\begin{split}
\frac{1}{2\gamma } \|  (|x| + 1)^{\frac{3 +\delta}{2}}(H - i\ep)u\|^2_{L^2} &+ \frac{\gamma }{2} \| (|x| + 1)^{-\frac{3 +\delta}{2}}u\|^2_{L^2}  \\
&\ge \real \langle (H - i \ep)u, u \rangle_{L^2} \\
 &= \int_{\R}  |u'|^2 dx -2 \imag \int_{-1}^1 \overline{u}'u  dx + M \int_{-1}^1 |u|^2 dx \\
&\ge \frac{1}{2} \| u'\|_{L^2}^2+ (M - 2) \|u\|^2_{L^2[-1,1]}.
\end{split}
\end{equation}
On the other hand
\begin{equation*}
\begin{split}
\int& (|x| + 1)^{-3-\delta} |u|^2 dx \\
&= \frac{1}{2 + \delta} \big( \int^0_{-\infty} \partial_x ((-x + 1)^{-2 -\delta}) |u|^2 dx -\int^\infty_0 \partial_x ((|x| + 1)^{-2 -\delta}) |u|^2 dx  \big) \\
&= \frac{2}{2 + \delta} |u(0)|^2 + \frac{2}{2 + \delta} \real \big(  \int_0^\infty(x + 1)^{-2 -\delta} \overline{u}'u dx - \int_{-\infty}^0(-x + 1)^{-2 -\delta} \overline{u}'u dx \big) \\
& \le \frac{2}{2 + \delta} |u(0)|^2 + \frac{1}{2} \int (|x| + 1)^{-3-\delta} |u|^2 dx + \frac{1}{2 + \delta} \int (|x| + 1)^{-1-\delta} |u'|^2 dx.
\end{split}
\end{equation*}
Now use that, for any $v \in H^1[-1,1]$, $\| v\|^2_{L^\infty[-1,1]} \le 2^{-1} \|v \|^2_{L^2[-1,1]} + 2 \|v \|_{L^2[-1,1]} \|v' \|_{L^2[-1,1]}$ \cite[Problem 2.33]{te},
\begin{equation} \label{IBP to control wtd norm}
\begin{split}
\int &(|x| + 1)^{-3-\delta} |u|^2 dx \\
&\le \frac{2}{2 + \delta} \big( \|u \|^2_{L^2[-1,1]} +  4\| u \|_{L^2[-1,1]} \| u' \|_{L^2[-1,1]} \big) +  \frac{2}{2 + \delta} \int (|x| + 1)^{-1-\delta} |u'|^2 dx \\
&\le \frac{2}{2 + \delta} \big( 3\|u \|^2_{L^2[-1,1]} +   2\| u' \|^2_{L^2[-1,1]} \big) +  \frac{2}{2 + \delta} \int (|x| + 1)^{-1-\delta} |u'|^2 dx.
\end{split}
\end{equation}
Hence
\begin{equation} \label{control wtd norm}
 \frac{2+ \delta}{12}\int (|x| + 1)^{-3-\delta} |u|^2 dx \le \frac{1}{2}\|u \|^2_{L^2[-1,1]} +    \frac{1}{2}\| u' \|^2_{L^2}.
\end{equation}
Thus, if we $M$ choose large enough in \eqref{choose M here}, in combination with \eqref{control wtd norm} we get
\begin{equation*}
 \frac{2+ \delta}{12}\int (|x| + 1)^{-3-\delta} |u|^2 dx \le \frac{1}{2\gamma } \|  (|x| + 1)^{\frac{3 +\delta}{2}}(H - i\ep)u\|^2_{L^2} + \frac{\gamma }{2} \| (|x| + 1)^{-\frac{3 +\delta}{2}}u\|^2_{L^2}.
 \end{equation*}
Selecting $\gamma$ small enough yields, for $C > 0$ independent of $\ep$ and $u$,
\begin{equation*}
\| (|x| + 1)^{-\frac{3+\delta}{2}} u \|_{L^2} \le C \|  (|x| + 1)^{\frac{3 +\delta}{2}}(H - i\ep)u\|^2_{L^2}.
 \end{equation*}
 This estimate implies the cutoff resolvent does not have a zero resonance. A similar estimate can be performed if $b = 0$ and $V = M \delta_0$, i.e., $V$ is the dirac measure of mass $M$ concentrated at zero. In that case the last term of line three of \eqref{choose M here} becomes $M |u(0)|^2$, and can be used to control the boundary term that appears after integrating by parts in \eqref{IBP to control wtd norm}.


\begin{thebibliography}{0}





\bibitem[AlEy12]{aley12} A. Aliev and E. Eyvazov. The resolvent equation of the one-dimensional Scrh\"odinger operator on the whole axis. \textit{Sib. Math. J.} 53(6) (2012), 957--964.

\bibitem[Bu98]{bu98} N. Burq. D\'ecroissance de l'\'energie locale de l'\'equation des ondes pour le probl\`eme ext\'erieur et absence de r\'esonance au voisinage du r\'eel. \textit{Acta Math.} 180(1) (1998), 1--29

\bibitem[Bu02]{bu02} N. Burq, \textit{Lower bounds for shape resonances widths of long range Schr\"odinger operators}. Amer. J. Math., 124(4) (2002), 677--735


\bibitem[BGT04]{bgt04} N. Burq, P. G\'erard, N. Tzvetkov. On nonlinear Schr\"odinger equations in exterior domains. \textit{Ann. Inst. H. Poincare\'e Anal. Non Lin\'eaire} 21(3) (2004), 295-318

\bibitem[CCV14]{ccv14} F. Cardoso, C. Cuevas, G. Vodev. Resolvent estimates for perturbations by large magnetic potentials. \textit{J. Math. Phys.} 55(2) (2014) 0203502, 8 pp.

\bibitem[CCV13]{ccv13} F. Cardoso, C. Cuevas, G. Vodev. High frequency resolvent estimates for perturbations by large long-long range magnetic potentials and applications to dispersive estimates. \textit{Ann. Henri. Poincar\'e.} 14(1) (2013) 95--117

\bibitem[CaVo02]{cavo02} F. Cardoso and G. Vodev. Uniform Estimates of the Resolvent of the Laplace-Beltrami Operator on Infinite Volume Riemannian Manifolds. II. \textit{Ann. Henri Poincar\'e} 4(3) (2002), 673--691

\bibitem[CaVo04]{cavo04} F. Cardoso and G. Vodev. High Frequency Resolvent Estimates and Energy Decay of Solutions to the Wave Equation.\textit{Canad. Math. Bull.}, (4)47 (2004), 504--514

\bibitem[Da14]{da14} K. Datchev. Quantitative limiting absorption principle in the semiclassical limit. \textit{Geom. Func. Anal.} 24(3) (2014), 740--747


\bibitem[DadH16]{ddh16} K. Datchev and M. V. de Hoop. Iterative reconstruction of the wavespeed for the wave equation with bounded frequency boundary data.  \textit{Inverse Problems} 32(2) (2016),  025008, 21 pp. 

\bibitem[DaJi20]{daji20} K. Datchev and L. Jin. Exponential lower resolvent bounds far away from trapped sets. \textit{J. Spectr. Theory} 10(2) (2020), 617-649 

\bibitem[DaMa22]{dama22} K. Datchev and N. Malawo. Semiclassical resonance asymptotics for the delta potential on the half line. \textit{Proc. Amer. Math. Soc.} 150(11) (2022), 4909--4921

\bibitem[DMW24]{dmw24} K. Datchev, J. Marzuola, and J. Wunsch. Newton polygons and resonances of multiple delta-potentials. \textit{Trans. Amer. Math Soc.} 377(3) (2024), 2009-2025

\bibitem[DaSh20]{dash20} K. Datchev and J. Shapiro. Semiclassical Estimates for Scattering on the Real Line. \textit{Comm. Math. Phys.} 376(3) (2020), 2301--2308

\bibitem[DaSh23]{dash22} K. Datchev and J. Shapiro. Exponential time decay for a one dimensional wave equation with coefficients of bounded variation. \textit{Math. Nachr.} 296(11) (2023), 4978--4994

\bibitem[D'AnFa06]{d'anfa06} P. D'Ancona and L. Fanelli. $L^p$-Boundedness of the wave operator for the one dimension Schr\"odinger operator. \textit{Comm. Math. Phys.} 268(2) (2006), 415--438.

\bibitem[DyZw19]{dz} S. Dyatlov and M. Zworski. Mathematical Theory of Scattering Resonances. Graduate Studies in Mathematics 200. American Mathematical Society, Providence, RI (2019)



\bibitem[EcTe13]{ecte13} J. Eckhardt and G. Teschl. Sturm-Liouville operators with measure-valued coefficients. \textit{J. Anal. Math.} 120 (2013), 151--224


\bibitem[Fo99]{fo} G. Folland. Real analysis: modern techniques and their applications, 2nd ed. Wiley, New York (1999) 

\bibitem[GaSh22b]{gash22b} J. Galkowski and J. Shapiro. Semiclassical resolvent bounds for long range Lipschitz potentials \textit{Int. Math. Res. Not. IMRN} 2022 (18) (2022), 14134--14150 

\bibitem[Ga22a]{gash22a} J. Galkowski and J. Shapiro. Semiclassical resolvent bounds for weakly decaying potentials. \textit{Math. Res. Lett}. 29(2) (2022), 373--398

\bibitem[Gal19]{gal19} J. Galkowski. Distribution of resonances in scattering by thing barriers. \textit{Mem. Amer. Math. Soc.} 259(1248) (2019), ix+152pp.


\bibitem[GaSh22a]{gash22a} J. Galkowski and J. Shapiro. Semiclassical resolvent bounds for weakly decaying potentials. \textit{Math. Res. Lett}. 29(2) (2022), 373--398

\bibitem[GaSh22b]{gash22b} J. Galkowski and J. Shapiro. Semiclassical resolvent bounds for long range Lipschitz potentials. \textit{Int. Math. Res. Not. IMRN} 2022(18) (2022), 14134--14150

\bibitem[Gr24]{gr24} V. Grasselli. High frequency resolvent estimates for the magnetic Laplacian on non compact manifolds, preprint. https://hal.science/hal-04513013


\bibitem[Ha82]{ha82} E. Harrell II. General Lower Bounds for Resonances in One Dimension. \textit{Commun. Math. Phys.} 86(2) (1982), 221--225

\bibitem[Hi99]{hi99} M. Hitrik. Bounds on Scattering Poles in One Dimension. \textit{Commun. Math. Phys.} 208(2) (1999), 381--411

\bibitem[HrMy01]{hrmy01} R. Hryniv and Y. Mykytyuk. 1-D Schr\"odinger operators with periodic singular potentials. \textit{Methods Funct. Anal. Topology} 7(4) (2001), 31--42

\bibitem[HrMy12]{hrmy12} R. Hryniv and Y. Mykytyuk. Self-adjointness of Schr\"odinger operators with singular potentials. \textit{Methods Funct. Anal. Topology} 18(2) (2012), 152--159


\bibitem[KlVo19]{klvo19} F. Klopp and M. Vogel. Semiclassical resolvent estimates for bounded potentials. \textit{Pure Appl. Anal.} 1(1) (2019), 1--25

\bibitem[LaSh24]{lash24} A. Larra\'in-Hubach and J. Shapiro. Semiclassical estimates for measure potentials on the real line, to appear in \textit{J. Spectr. Theory.} 14(3) (2024), 1033--1062

\bibitem[MPS20]{mps20} C. Mero\~no, L. Potenciano-Machado, M. Salo. Resolvent estimates for the magnetic Schr\"odinger operator in dimensions $\ge 2$. \textit{Rev. Mat. Complut.} 33(2) (2020), 619--641.

\bibitem[LSW24]{lsw24} M. Luki\'c, S. Sukhtaeiv, and X. Wang. Spectral properties of Schr\"odinger operators with locally $H^{-1}$ potentials. \textit{J. Spectr. Theory} 14(1) (2024), 59--120

 
 \bibitem[Ob24]{ob24} D. Obovu. Resolvent bounds for Lipschitz potentials in dimension two and higher with singularities at the origin. \textit{J. Spectr. Theory} 14(1) (2024) 163--183
 

 

\bibitem[Sa16]{sa16} A. Sacchetti. Quantum resonances and time decay for a double-barrier model. \textit{J. Phys. A: Math. Theor.} 49(17) (2016), 17501, 20pp


\bibitem[Sh18]{sh18}
J. Shapiro. Local energy decay for Lipschitz wavespeeds. \textit{Comm. Partial Differential Equations} 43(5) (2018), 839--858

\bibitem[Sh19]{sh19} J. Shapiro. Semiclassical resolvent bounds in dimension two. \textit{Proc. Amer. Math. Soc.} 147(5) (2019), 1999--2008

\bibitem[Sh20]{sh20} J. Shapiro. Semiclassical resolvent bound for compactly supported $L^\infty$ potentials. \textit{J. Spectr. Theory.} 10(2) (2020), 651-672

\bibitem[Sh24]{sh24} J. Shapiro. Semiclassical resolvent bounds for short range $L^\infty$ potentials with singularities at the origin. \textit{Asymptot. Anal.} 136(3-4) (2024), 157--180  

\bibitem[SjZw91]{sz91} J. Sj\"ostrand and M. Zworski. Complex Scaling and the Distribution of Scattering Poles. \textit{J. Amer. Math. Soc.} 4(4) (1991), 729--769

 
 \bibitem[Te14]{te} G. Teschl. Mathematical methods in quantum mechanics, with applications to Schr\"odinger operators, 2nd ed. Graduate 
Studies in Mathematics 200. American Mathematical Society, Providence, RI (2014)


\bibitem[Vo13]{vo13} G. Vodev. Semiclassical resolvent estimates for Schr\"odinger operators. \textit{Asymptot. Anal.} 81(2) (2013), 157--170

\bibitem[Vo14a]{vo14}  G. Vodev. Semi-classical resolvent estimates and regions free of resonances. \textit{Math. Nach.} 287(7) (2014), 825--835

\bibitem[Vo14b]{vo14b}  G. Vodev. Resolvent estimates for the magnetic Schr\"odinger operator. \textit{Anal. PDE.} 11(1) (2018), 213--236

\bibitem[Vo19]{vo19} G. Vodev. Semiclassical resolvent estimates for short-range $L^\infty$ potentials. \textit{Pure Appl. Anal.} 1(2) (2019), 207--214

\bibitem[Vo20a]{vo20a} G. Vodev. Semiclassical resolvents estimates for $L^\infty$ potentials on Riemannian manifolds. \textit{Ann. Henri Poincar\'e.} 21(2) (2020), 437--459

\bibitem[Vo20b]{vo20b} G. Vodev. Semiclassical resolvent estimates for short-range $L^\infty$ potentials. II. \textit{Asymptot. Anal.} 118(4) (2020), 297--312

\bibitem[Vo20c]{vo20c} G. Vodev. Semiclassical resolvent estimates for H\"older potentials. \textit{Pure Appl. Anal.} 2(4) (2020), 841--860

\bibitem[Vo25]{vo25} G. Vodev. Semiclassical resolvent estimates for the magnetic Schr\"odinger operator, preprint. arXiv 2051.07271



\bibitem[Zw87]{zw87} M. Zworski. Distribution of Poles for Scattering on the Real Line. \text{J. Func. Anal.}, 73(2) (1987), 277--296.

\end{thebibliography}
\end{document}